\theoremstyle{plain}
  \newtheorem{thm}{Theorem}[section]
  \newtheorem{lem}[thm]{Lemma}
  \newtheorem{prop}[thm]{Proposition}
\theoremstyle{definition}
  \newtheorem{defn}[thm]{Definition}
  \newtheorem{rmk}[thm]{Remark}
  \newtheorem{ex}[thm]{Example}
\theoremstyle{plain}
  \theoremstyle{plain}
  \newtheorem{que}{Question}
\DeclareMathOperator{\im}{im\,}
\numberwithin{equation}{section}
\title{Cohomology of minimal Sullivan algebras of non-finite type and their realizations}
\author{Jiawei Zhou}
\date{September 25, 2024}
\begin{document}

\maketitle

\begin{abstract}
We prove that the morphisms from a minimal Sullivan algebra $\Lambda V$ to $A_{PL}(|\Lambda V|)$, the algebra of polynomial differential forms on its realization, can be quasi-isomorphic if and only if the cohomology $H(\Lambda V)$ is of finite type. Importantly, $\Lambda V$ itself need not be of finite type. For example, it can be the minimal Sullivan model of the wedge sum of a circle and a sphere. This provides a negative answer to a question posed by F\'elix, Halperin, and Thomas. Furthermore, we study the spaces whose homotopy groups are reflected by their minimal Sullivan models as a generalization of Sullivan spaces, and explore which properties of Sullivan spaces can be broadened.
\end{abstract}

\section{Introduction}

The algebraic models formulated by Quillen \cite{Quillen} and Sullivan \cite{Sullivan} serve as cornerstones in rational homotopy theory. Quillen's model is a differential graded Lie algebra constructed from the rational homotopy groups of a simply connected space, while Sullivan's model is a commutative differential graded algebra (CDGA) built upon the rational cohomology ring. Every path-connected space possesses a unique CDGA (up to isomorphism) of special type, known as its minimal Sullivan model.

The minimal Sullivan model of a space $X$ is a free graded commutative algebra $\Lambda V$ equipped with a differential that satisfies certain conditions. There need to be a quasi-isomorphism from $\Lambda V$ to $A_{PL}(X)$, the CDGA of polynomial differential forms on X. This quasi-isomorphism is sometimes also referred to as the minimal Sullivan model of $X$.

Moreover, when $X$ is simply connected and $H^*(X,\mathbb{Q})$ is of finite type, the rational homotopy group $\pi_n(X)\otimes\mathbb{Q}$ can be readily computed from $\Lambda V$. This homotopy group is isomorphic to the dual space of $V^n$, where $V^n$ denotes the subspace of $V$ in degree $n$. These dual spaces are also defined as the homotopy group of $\Lambda V$ and written as $\pi_n(\Lambda V)$.

Conversely, any simply connected (with trivial first cohomology) CDGA is quasi-isomorphic to a unique special CDGA $\Lambda V$, known as the minimal Sullivan algebra. When $\Lambda V$ is of finite type, one can construct a topological space $|\Lambda V|$ such that $\pi_n(|\Lambda V|)=\pi_n(|\Lambda V|)\otimes\mathbb{Q}$ is isomorphic to the homotopy group of $\Lambda V$, and $H^n(|\Lambda V|;\mathbb{Z}) = H^n(|\Lambda V|;\mathbb{Q}) = H^n(\Lambda V;\mathbb{Q})$. The space $|\Lambda V|$ is referred to as the realization of $\Lambda V$. If $\Lambda V$ is a minimal Sullivan model of some space $X$, then $|\Lambda V|$ and $X$ have the same rational homotopy type.

Therefore, the minimal Sullivan model and the realization establish a bijective correspondence between the rational homotopy types of simply connected spaces with finite-type cohomology and the isomorphism classes of simply connected minimal Sullivan algebras of finite type.

For the general case, such an identification no longer holds. While we can still construct a minimal Sullivan model based on the cohomology of a space, their rational homotopy groups may differ. Similarly, although we can construct the realization of a minimal Sullivan algebra, their rational cohomologies may not necessarily be isomorphic. These observations lead to the following questions.
\begin{itemize}
\item Given a minimal Sullivan algebra $\Lambda V$, under what conditions is the map $\Lambda V\to A_{PL}(|\Lambda V|)$ a quasi-isomorphism?
\item Given a topological space $X$ with a minimal Sullivan model $\Lambda V$, when is $\pi_n(X)\otimes\mathbb{Q} \to \pi_n(\Lambda V)$ isomorphic for all $n\geq 2$? Furthermore, under what conditions can we construct certain isomorphisms based on $\pi_1(X)$ and the homotopy Lie algebra of $\Lambda V$?
\end{itemize}

When $H^1(\Lambda V;\mathbb{Q})$ and $V^n $ for $n\geq 2$ are all finite-dimensional, these questions have been answered affirmatively. The corresponding path-connected space $X$ satisfies $\dim H^1(X;\mathbb{Q})<\infty$ and $H^*(\widetilde{X};\mathbb{Q})$ is of finite type, where $\widetilde{X}$ is the universal cover of $X$. Such space a $X$ is called a Sullivan space if its minimal Sullivan model $\Lambda V \stackrel{\simeq}{\longrightarrow} A_{PL}(X)$ induces isomorphisms $\pi_n(X)\otimes\mathbb{Q} \stackrel{\cong}{\longrightarrow} \pi_n(\Lambda V)$ for all $n\geq 2$. Several equivalent conditions for being a Sullivan space exist \cite[Theorem 7.1 and 7.2]{FHT2}. On the other hand, if $\Lambda V$ satisfies the above hypotheses, then the induced map $m_{|\Lambda V|}: \Lambda V\to A_{PL}(|\Lambda V|)$ is a quasi-isomorphism if and only if $|\Lambda V|$ is a Sullivan space \cite[Theorem 7.8]{FHT2}.

In the case where $\Lambda V$ is simply connected, $m_{|\Lambda V|}:\Lambda V\to A_{PL}(|\Lambda V|)$ is a quasi-isomorphism if and only if $V$ is of finite type \cite[Theorem 1.5]{FHT2}. Consequently, if the minimal Sullivan model $\Lambda V$ of a simply connected space $X$ induces isomorphisms on rational homotopy groups, then $H^*(X;\mathbb{Q})$ must be of finite type. This is because, under these conditions, $X$ and $|\Lambda V|$ have the same rational homotopy type.

It remains to consider the case where the space is neither simply connected nor of finite type. In \cite{FHT2}, F\'elix, Halperin, and Thomas posed the question of whether there exist minimal Sullivan algebras that are quasi-isomorphic to their realizations, even if they are not simply connected or of finite type.

\begin{que}\label{FHT question}
Suppose that $\Lambda V$ is a minimal Sullivan model of a path-connected space $X$, and the induced map $X\to |\Lambda V|$ is an isomorphism on rational cohomologies. Is $|\Lambda V|$ necessarily a Sullivan space? If so, must $X$ also be a Sullivan space?
\end{que}

When $H^1(\Lambda V;\mathbb{Q})$ and $V^n$ for $n\geq 2$ are finite dimensional, Theorem 7.8 of \cite{FHT2} implies that $|\Lambda V|$ must be a Sullivan space. Thus, the first part of Question \ref{FHT question} essentially examines whether the map $m_{|\Lambda V|}:\Lambda V \to A_{PL}(|\Lambda V|)$ can be a quasi-isomorphism when $\dim V^n=\infty$ for some $n\geq 2$. We will see that this is possible, provided that $H^*(\Lambda V)$ is of finite type.

\begin{thm}\label{intro cohomology of minimal Sullivan and realization}
Let $\Lambda V$ be a minimal Sullivan algebra. The following statements are equivalent.
\begin{enumerate}[(i)]
\item There exists a quasi-isomorphism $\Lambda V \stackrel{\simeq}{\longrightarrow} A_{PL}(|\Lambda V|)$.
\item $m_{|\Lambda V|}:\Lambda V \to A_{PL}(|\Lambda V|)$ is a quasi-isomorphism.
\item $H^*(\Lambda V)$ has finite type.
\end{enumerate}
\end{thm}
Thus, the minimal Sullivan model of $S^1\vee S^2$ provides a counterexample to the first part of Question \ref{FHT question}, as explained in Example \ref{eg S1 vee S2}.

Since realization and taking minimal Sullivan model also send corresponding morphisms to each other, we can generalize Theorem \ref{intro cohomology of minimal Sullivan and realization} using the language of category theory.

\begin{thm}
Let $\mathcal{C},\mathcal{D}$ be categories, and let $\mathcal{F},\mathcal{G}$ be contravariant functors defined as follows.

$\mathrm{Ob}(\mathcal{C})$: isomorphism classes of minimal Sullivan algebras with finite type cohomology.

$\mathrm{Mor}(\mathcal{C})$: homotopy classes of CDGA morphisms.

$\mathrm{Ob}(\mathcal{D})$: path connected spaced with finite type cohomology.

$\mathrm{Mor}(\mathcal{D})$: homotopy classes of continuous maps.

$\mathcal{F}:\mathcal{C}\to \mathcal{D}$, realization.

$\mathcal{G}:\mathcal{D}\to \mathcal{C}$, take minimal Sullivan models.

Then $\mathcal{F}:\mathcal{C}\to\mathcal{D}$ is a full and faithful functor, with a left inverse given by $\mathcal{G}$.
\end{thm}

We also investigate the relationship between the homotopy groups of a topological space and its minimal Sullivan model. For a minimal Sullivan model $\Lambda V \stackrel{\simeq}{\longrightarrow} A_{PL}(X)$ to induce isomorphisms $\pi_n(X)\otimes\mathbb{Q} \stackrel{\cong}{\longrightarrow} \pi_n(\Lambda V)$ for $n\geq 2$, none of the critical properties of Sullivan spaces from Theorem \ref{thm 7.2 FHT} necessarily hold. In fact, it is possible that $\dim H^1(X; \mathbb{Q})=\infty$. However, this situation can be excluded if additional constraints are imposed on $\pi_1(X)$, such as requiring that its Malcev $\mathbb{Q}$-completion \cite{malcev} is the fundamental group of the minimal model. When $\dim H^1(X; \mathbb{Q})<\infty$, the isomorphism $\pi_n(X)\otimes\mathbb{Q} \cong \pi_n(\Lambda V)$ holds only if $\dim H^*(X; \mathbb{Q})$ is of finite type.

\begin{prop}\label{intro finite type of H(X)}
Let $f:\Lambda V \stackrel{\simeq}{\longrightarrow} A_{PL}(X)$ be a minimal Sullivan model of a topological space $X$, and suppose $\dim H^1(X; \mathbb{Q}) = \dim H^1(\Lambda V) < \infty$. If the induced map $\pi_n(f):\pi_n(X)\otimes\mathbb{Q} \to \pi_n(\Lambda V)$ is an isomorphism for all $n\geq 2$, then $H^*(X; \mathbb{Q})$ is of finite type.
\end{prop}

Finally, we present an example of a non-Sullivan space whose minimal Sullivan model can be realized as a Sullivan space. This provides a negative answer to the second part of Question \ref{FHT question}.

This paper is organized as follows. In Section 2, we review the realization of minimal Sullivan algebras, the holonomy actions of fibrations and their algebraic representations, as well as Sullivan spaces. These topics lay the foundation for proving Theorem \ref{intro cohomology of minimal Sullivan and realization} in Section 3. In Section 4, we explore the relationship between the homotopy groups of a topological space and its minimal Sullivan model. Throughout these sections, we assume the ground ring is $\mathbb{Q}$, and hence, for brevity, we denote $H^*(-;\mathbb{Q})$ simply as $H^*(-)$.

The author would like to thank Yves F\'elix, Jianfeng Lin, Jie Wu and Juxin Yang for helpful discussions. This research is supported by the National Key Research and Development Program of China No. 2020YFA0713000.

\section{Preliminary}
\subsection{Realization of minimal Sullivan algebras}
A \textbf{Sullivan algebra} is a free graded algebra $\Lambda V$ generated by a graded vector space $V$ of positive degree, equipped with a differential $d$ such that $V$ has a filtration
$$
0=V(-1) \subset V(0) \subset V(1) \subset \ldots \subset V(k) \subset \ldots
$$
and $dV(k)\subset\Lambda V(k-1)$. For simplicity, we will use $\Lambda V$ to denote the Sullivan algebra $(\Lambda V,d)$.

We call $\Lambda V$ \textbf{minimal} if $dV\subset \Lambda^{\geq 2} V$. Here $\Lambda^{\geq n} V$ denotes the subspace of $\Lambda V$ spanned by elements with wordlength at least $n$.

The \textbf{simplicial commutative cochain algebra}, $A_{PL}$, is defined as follows. $(A_{PL})_n$ is the quotient algebra of the free graded algebra generated by $t_0,\ldots,t_n,dt_0,\ldots,dt_n$, with the identification $t_0+\ldots+t_n=1$. Here $t_0,\ldots,t_n$ are all of degree 0. So $dt_0,\ldots,dt_n$ are degree-1 coboundaries and $dt_0+\ldots+dt_n=0$. The face and degeneracy morphisms of $A_{PL}$ are defined by 
$$
\partial_i t_k =
\begin{cases}
t_k & \text{if } k<i, \\
0 & \text{if } k=i, \\
t_{k-1} & \text{if } k>i,
\end{cases}
\quad\text{and}\quad
s_j t_k =
\begin{cases}
t_k & \text{if } k<j, \\
t_k+t_{k+1} & \text{if } k=j, \\
t_{k+1} & \text{if } k>j.
\end{cases}
$$

Given a Sullivan algebra $\Lambda V$, we can construct a simplical set $\langle \Lambda V \rangle$. Define $\langle \Lambda V \rangle_n$ as the morphisms of CDGAs from $\Lambda V$ to $(A_{PL})_n$. For each $\sigma\in\langle \Lambda V \rangle_n$, set $\partial_i(\sigma)=\partial_i\circ\sigma$ and $s_j(\sigma)=s_j\circ\sigma$.

By the Milnor realization, $\langle \Lambda V \rangle$ induces a CW complex $|\Lambda V|$. More generally, given any simplicial set $K$, its Milnor realization $|K|$ is defined as follows. Let $e_0,e_1,\ldots$ be the standard basis of $\mathbb{R}^{\infty}$ and let $\Delta^n$ denote the Euclidean simplex with vertices $e_0,e_1,\ldots,e_n$. Then
$$
|K| = \bigsqcup_n \, (K_n\times\Delta^n)/\sim
$$
where each $K_n$ is equipped with the discrete topology and $\sim$ is the equivalence relation generated by
$$
\partial_i\sigma\times x \sim \sigma\times\mu_i x, \quad s_j\sigma\times y \sim \sigma\times\pi_j y.
$$
Here $\mu_i:\Delta^{n-1}\to\Delta^n$ and $\pi_j:\Delta^{n+1}\to\Delta^n$ are defined by
$$
\mu_i e_k =
\begin{cases}
e_k & \text{if } k<i, \\
e_{k+1} & \text{if } k\geq i,
\end{cases}
\quad
\pi_j e_k =
\begin{cases}
e_k & \text{if } k\leq j, \\
e_{k-1} & \text{if } k>j.
\end{cases}
$$

The homotopy group $\pi_n(|\Lambda V|)$ is related to $(V^n)^{\sharp}$, where $V^n$ is the degree $n$ subspace of $V$ and $\sharp$ denotes the dual space.

\begin{thm}[Theorem 1.3 and 1.4 of \cite{FHT2}]\label{bij between homotopies}
Let $\Lambda V$ be a minimal Sullivan algebra. There exists natural bijective maps $\iota_n:\pi_n(|\Lambda V|) \to (V^n)^{\sharp}$ satisfying the following properties.
\begin{enumerate}[(i)]
\item $\iota_n$ are natural transformations.
\item For $n\geq 2$, $\iota_n$ is an isomorphism of abelian groups.
\item If $H^1(\Lambda V)$ is finite dimensional, $\text{exp}_L\circ\iota_1$ is an isomorphism of groups. Here $\text{exp}_L$ is a bijective map on $(V^1)^{\sharp}$, and equips its image with a group structure.
\end{enumerate}
\end{thm}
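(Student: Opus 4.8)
The plan is to realize everything simplicially. Since $A_{PL}$ is extendable, $\langle\Lambda V\rangle$ is a Kan complex, and $\langle\Lambda V\rangle_0$ is a single point (the augmentation), so $|\Lambda V|$ is connected with a canonical basepoint $*$ and $\pi_n(|\Lambda V|,*)=\pi_n(\langle\Lambda V\rangle,*)$. An element of the latter is represented by a CDGA morphism $\sigma\colon\Lambda V\to(A_{PL})_n$ all of whose faces equal the basepoint; then for $v\in V^n$ the element $\sigma(v)$ lies in top degree and restricts to $0$ on $\partial\Delta^n$, hence is a cocycle of $A_{PL}(\Delta^n,\partial\Delta^n)$, so I would set
$$
\iota_n([\sigma])(v)=\int_{\Delta^n}\sigma(v)=\bigl\langle\,[\sigma(v)],\,[\Delta^n,\partial\Delta^n]\,\bigr\rangle ,
$$
the pairing of the restriction to this $n$-sphere of the form $m_{|\Lambda V|}(v)$ with the fundamental class. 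This is linear in $v$, hence defines an element of $(V^n)^{\sharp}$, and it depends only on the based homotopy class of $\sigma$ by Stokes' theorem. For $n\geq2$, additivity of $\iota_n$ is a standard manipulation of this integral under the operations of the Kan complex used to add homotopy classes. Naturality of $\iota_n$, with respect to the maps $\varphi_1^{\sharp}$ induced on linear parts by a morphism $\varphi$ of minimal Sullivan algebras, follows from the naturality of the unit maps $m_{|\Lambda V|}$ once one knows that decomposable elements pair trivially with the fundamental class of $S^n$ (using $H^{<n}(\Delta^n,\partial\Delta^n)=0$).

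I would prove bijectivity by induction along the Sullivan filtration, starting from $\Lambda V(0)$, on which $d$ vanishes. Fixing a graded complement $W_k$ of $V(k-1)$ in $V(k)$, so $\Lambda V(k)=\Lambda V(k-1)\otimes\Lambda W_k$, and applying $\langle-\rangle$ to this relative Sullivan algebra yields a Kan fibration $\langle\Lambda V(k)\rangle\to\langle\Lambda V(k-1)\rangle$ whose fibre over $*$ is $\langle(\Lambda W_k,0)\rangle$, because $dW_k\subset\Lambda V(k-1)$; this fibre is a connected Kan complex with $\pi_\ell=(W_k^\ell)^{\sharp}$ — a generalized Eilenberg--MacLane space — by the de Rham--type fact that $\langle(\Lambda x,0)\rangle$ realizes to $K(\mathbb{Q},\deg x)$. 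Now minimality gives $dW_k\subset\Lambda^{\geq 2}V(k-1)$, so the connecting homomorphisms of this fibration, which are transposes of the linear part of $d$ on $W_k$ (as one checks by reduction to the path--loop model $(\Lambda(v,w),dw=v)$), all vanish. Hence the homotopy long exact sequences break into short exact sequences
$$
0\longrightarrow (W_k^m)^{\sharp}\longrightarrow\pi_m\bigl(\langle\Lambda V(k)\rangle\bigr)\longrightarrow\pi_m\bigl(\langle\Lambda V(k-1)\rangle\bigr)\longrightarrow0 ,
$$
and an induction on $k$ — compatible with the maps $\iota_m$ attached to the sub-algebras $\Lambda V(k)$, since the restriction of $\iota_m$ to each fibre is the Eilenberg--MacLane identification — gives $\pi_m(\langle\Lambda V(k)\rangle)\cong(V(k)^m)^{\sharp}$ for every $m$ (as $\mathbb{Q}$-vector spaces when $m\geq2$). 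Finally $\langle\Lambda V\rangle=\varprojlim_k\langle\Lambda V(k)\rangle$; since all bonding maps on $\pi_m$ are surjective, the tower is Mittag--Leffler, ${\varprojlim}^{1}$ vanishes, and the Milnor sequence gives $\pi_m(|\Lambda V|)\cong\varprojlim_k(V(k)^m)^{\sharp}=(V^m)^{\sharp}$, the isomorphism being $\iota_m$. This settles (i) and (ii).

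For $n=1$ the same tower computes $\pi_1$: on underlying sets the short exact sequences above — now of groups that become non-abelian as $k$ grows — still yield a bijection $\iota_1\colon\pi_1(|\Lambda V|)\to(V^1)^{\sharp}$. The group law on $\pi_1$, however, is a projective limit of iterated non-central extensions of the abelian groups $(W_k^1)^{\sharp}$ and is not abelian in general. The role of the hypothesis $\dim H^1(\Lambda V)<\infty$ is exactly that it makes the degree-$\leq1$ part of the homotopy Lie algebra $L$ of $\Lambda V$ a well-defined pro-nilpotent Lie algebra whose lower central series is carried by the filtration; transporting its Baker--Campbell--Hausdorff group structure to $(V^1)^{\sharp}$ through the duality defines $\text{exp}_L$, and matching the two filtered structures degree by degree shows $\text{exp}_L\circ\iota_1$ is an isomorphism of groups. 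I expect this last step — constructing $\text{exp}_L$ and identifying the group structures, which is where the finiteness of $H^1$ genuinely enters — to be the main obstacle; granted the identification of the tower's fibres with generalized Eilenberg--MacLane spaces and the vanishing of the connecting maps, everything else is bookkeeping along the filtration.
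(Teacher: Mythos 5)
This statement is not proved in the paper at all: it is imported verbatim as Theorem 1.3 and 1.4 of \cite{FHT2}, so there is no in-paper argument to compare against. Judged on its own terms, your sketch follows what is essentially the strategy of the cited reference: realize the Sullivan filtration as a tower of Kan fibrations $\langle\Lambda V(k)\rangle\to\langle\Lambda V(k-1)\rangle$ with generalized Eilenberg--MacLane fibres $\langle(\Lambda W_k,0)\rangle$, observe that minimality forces the linear part of $d$ on $W_k$ into $\Lambda^{\geq 2}V(k-1)$ so the connecting homomorphisms vanish, splice the resulting short exact sequences, and pass to the limit with a Mittag--Leffler/Milnor argument; the definition of $\iota_n$ by pairing $\sigma(v)$ with the fundamental class of $(\Delta^n,\partial\Delta^n)$ is also the standard one. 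I see no step that would fail.

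Two places where you are deferring real work rather than doing it. First, for $n=1$ the inductive ``five lemma'' is only at the level of pointed sets: the sequences $1\to(W_k^1)^{\sharp}\to\pi_1(\langle\Lambda V(k)\rangle)\to\pi_1(\langle\Lambda V(k-1)\rangle)\to 1$ are extensions of non-abelian groups, while $\iota_1$ lands in the vector space $(V(k)^1)^{\sharp}$ and is not a homomorphism; to conclude bijectivity of $\iota_1$ at stage $k$ you must check that $\iota_1$ is additive along the fibre direction (i.e.\ $\iota_1(\beta\cdot a)=\iota_1(\beta)+\iota_1(a)$ for $a$ in the image of the fibre), which is not automatic from Stokes' theorem and needs an explicit computation with the prism/homotopy construction. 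Second, the identification of the connecting homomorphism with the transpose of the linear part of $d$, and the entire construction of $\exp_L$ via Baker--Campbell--Hausdorff under $\dim H^1(\Lambda V)<\infty$, are each several pages of \cite{FHT2}; you correctly flag the latter as the main obstacle. As an outline the proposal is sound and matches the reference's route; as a proof it stops short exactly where the reference has to work hardest.
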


For $n\geq 2$, $(V^n)^{\sharp}$ is called the $n$\textbf{-th homotopy group} of $\Lambda V$ and is denoted by $\pi_n(\Lambda V)$. To define the \textbf{fundamental group} of $\Lambda V$, we need to restrict to the case where $\dim H^1(\Lambda V)<\infty$. It is given by the image of $\text{exp}_L$ and is denoted by $\pi_1(\Lambda V)$.

In particular, when $V$ is concentrated in degree 1 and $\dim H^1(\Lambda V)<\infty$, $|\Lambda V|$ is the classifying space $B\pi_1(\Lambda V)$ of $\pi_1(\Lambda V)$.

On the other hand, for any topological space $X$, the set of singular simplices $Sing \, X$ forms a simplicial set. The functors $|\bullet|$ and $Sing$ are adjoint between the categories of pointed simplicial sets and pointed topological spaces. Specifically, if $(K,u_0)$ is a pointed simplicial set and $(X,x_0)$ is a pointed topological space, then the based continuous maps $(|K|,u_0)\to(X,x_0)$ and the based simplicial maps $(K,u_0)\to(Sing \, X,x_0)$ are naturally identified.

Thus, the identity map on $|K|$ is adjoint to a simplicial map $\lambda_K: K\to Sing \, |K|$. In particular, $\lambda_K$ maps $\sigma\in K_n$ to the singular simplex $\Delta^n \to |K|$ given by $x\mapsto \sigma\times x$.

Let $S_n(K)$ be the $\mathbb{Q}$-vector space with basis $K_n$, and let $D_n(K)$ be the subspace spanned by the degenerate simplices. Then the operator $\partial=\sum_{i=0}^n (-1)^i\partial_i$ defines a chain complex $C_*(K)$, where $C_n(K)=S_n(K)/D_n(K)$. Its dual, $C^*(K)=(C_*(K))^{\sharp}=\text{Hom}(C^*(K), \mathbb{Q})$, is a cochain complex. The map $\lambda_K$ induces isomorphisms $H_*(C_*(K))\stackrel{\cong}{\longrightarrow} H_*(|K|)$ and $H^*(|K|) \stackrel{\cong}{\longrightarrow} H^*(C^*(K))$.

The space of simplicial maps from $K$ to $A_{PL}$ also has a cochain algebra structure, which is called the \textbf{cochain algebra of polynomial forms on $K$}, and is denoted by $A_{PL}(K)$. An element $\Phi\in A_{PL}^k(K)$ of degree $k$ assigns to each $\sigma\in K_n$ an element $\Phi_{\sigma} \in (A_{PL})_n^k$, satisfying the conditions
$$
\partial_i\Phi_{\sigma} = \Phi_{\partial_i\sigma}, \quad s_j\Phi_{\sigma} = \Phi_{s_j\sigma}.
$$
The differential on $A_{PL}(K)$ is defined by $(d\Phi)_{\sigma}=d(\Phi_{\sigma})$.

There exists a quasi-isomorphism $\oint_K:A_{PL}(K)\stackrel{\simeq}{\longrightarrow} C^*(K)$ given by $(\oint_K \Phi)(\sigma)=\oint_n \Phi_{\sigma}$. Here $\oint_n:A_{PL}^n\to\mathbb{Q}$ is defined in the section 10 of \cite{FHT}. The properties of $\oint_n$, which are necessary for this paper, will be described later.

So $\oint_{Sing \, |K|}$ also induces an isomorphism $H^*(A_{PL}(Sing \, |K|))\stackrel{\cong}{\longrightarrow} H^*(|K|)$. Note that $A_{PL}(Sing \, |K|)$ is commonly denoted as $A_{PL}(|K|)$ for simplicity. In summary, we obtain the following commutative diagram of natural isomorphisms.

\begin{equation}\label{identify cohomologies}
\xymatrix@C=60pt{
H^*(|K|) \ar[r]^{H(C^*(\lambda_K))}_{\cong} & H^*(C^*(K)) \\
H^*(A_{PL}(|K|)) \ar[u]^{H(\oint_{Sing \, |K|})}_{\cong} \ar[r]^{H(A_{PL}(\lambda_K))}_{\cong} & H(A_{PL}(K)) \ar[u]^{H(\oint_K)}_{\cong}
}
\end{equation}

Consider a minimal Sullivan algebra $\Lambda V$ again. The identity map on $\langle \Lambda V \rangle$ is adjoint to a map $m_{\langle \Lambda V \rangle}: \Lambda V \to A_{PL}(\langle \Lambda V \rangle)$ defined by $(m_{\langle \Lambda V \rangle}(x))_{\sigma}=\sigma(x)$ for $x\in\Lambda V$ and $\sigma\in\langle \Lambda V \rangle$. It can be shown that the quasi-isomorphism $A_{PL}(\lambda_{\langle \Lambda V \rangle}): A_{PL}(|\Lambda V|) \to A_{PL}(\langle \Lambda V \rangle)$ is surjective. Consequently, $m_{\langle \Lambda V \rangle}$ has a lift $m_{|\Lambda V|}:\Lambda V\to A_{PL}(|\Lambda V|)$ such that $m_{\langle \Lambda V \rangle} = A_{PL}(\lambda_{\langle \Lambda V \rangle}) \circ m_{|\Lambda V|}$ \cite[Proposition 1.10]{FHT2}.

Since $\Lambda V$ is minimal, all coboundaries are in $\Lambda^{\geq 2}V$. So the map $\xi_n:H^n(\Lambda V) \to (\Lambda^{\geq 1}V/\Lambda^{\geq 2}V)^n \cong V^n$, which sends cohomology classes to the classes of representatives, is well-defined.

\begin{prop}[Proposition 1.19 of \cite{FHT2}]
Let $\Lambda V$ be a minimal Sullivan algebra. For $n\geq 2$, suppose $\alpha\in\pi_n(|\Lambda V|)$ and $\beta\in H^n(\Lambda V)$. Then
$$
\langle H^n \left(m_{|\Lambda V|}\right)\beta, hur\,\alpha \rangle = \langle \xi_n(\beta), \iota_n(\alpha) \rangle.
$$

It follows that $H^n \left(m_{|\Lambda V|}\right)^{\sharp}\circ hur=\xi_n^{\sharp}\circ\iota_n$, i.e. the following diagram commutes.
\end{prop}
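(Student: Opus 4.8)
The plan is to unwind both pairings into the simplicial model $\langle\Lambda V\rangle$ and match them using the construction of $\iota_n$ and the normalization of $\oint_n$. First I would fix representatives. Since $\langle\Lambda V\rangle$ is a Kan complex with $\pi_n(|\Lambda V|)\cong\pi_n(\langle\Lambda V\rangle)$ naturally, $\alpha$ is represented by an $n$-simplex $\sigma\in\langle\Lambda V\rangle_n$, that is, a CDGA morphism $\sigma\colon\Lambda V\to(A_{PL})_n$ all of whose faces equal the basepoint; as the basepoint simplex is the composite $\Lambda V\to\mathbb{Q}\to(A_{PL})_{n-1}$ through the augmentation, this forces $\partial_i(\sigma(u))=0$ for every $i$ and every $u\in\Lambda^{\geq 1}V$. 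Pick also a cocycle $z\in(\Lambda V)^n$ with $[z]=\beta$ and write $z=v+w$ with $v\in V^n$ and $w\in\Lambda^{\geq 2}V$, so that $\xi_n(\beta)=v$. By its construction in Theorem~\ref{bij between homotopies}, $\iota_n(\alpha)\in(V^n)^{\sharp}$ is the functional $v'\mapsto\oint_n\sigma(v')$ — that this does not depend on $\sigma$ is part of that theorem — so the right-hand side of the asserted identity equals $\oint_n\sigma(v)$.

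For the left-hand side I would trace $\beta$ and $hur\,\alpha$ through the commutative square of natural isomorphisms in the excerpt. Using $m_{\langle\Lambda V\rangle}=A_{PL}(\lambda_{\langle\Lambda V\rangle})\circ m_{|\Lambda V|}$ together with $(m_{\langle\Lambda V\rangle}(x))_{\sigma}=\sigma(x)$, the class $H^n(m_{|\Lambda V|})\beta$ is carried to the class of the cocycle $\oint_{\langle\Lambda V\rangle}(m_{\langle\Lambda V\rangle}(z))\in C^n(\langle\Lambda V\rangle)$, while $hur\,\alpha$ is carried to the homology class of the cycle $\sigma\in C_n(\langle\Lambda V\rangle)$ ($\sigma$ is a cycle because its faces are degenerate, and it is the image of the fundamental class of $S^n$ under a simplicial map representing $\alpha$). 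Naturality of the square and of the Kronecker pairing then gives
\begin{align*}
\langle H^n(m_{|\Lambda V|})\beta,\, hur\,\alpha\rangle
&=\Bigl(\oint_{\langle\Lambda V\rangle}m_{\langle\Lambda V\rangle}(z)\Bigr)(\sigma)=\oint_n\sigma(z)\\
&=\oint_n\sigma(v)+\oint_n\sigma(w),
\end{align*}
so the proposition reduces to the single vanishing $\oint_n\sigma(w)=0$ for $w\in(\Lambda^{\geq 2}V)^n$.

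I would obtain this by naturality rather than by a direct computation with $\oint_n$. Let $\Lambda W$ be a minimal Sullivan model of $S^n$, so $W$ lies in degree $n$ and, when $n$ is even, degree $2n-1$; since $n\geq 2$, every nonzero product of at least two generators has degree $\geq 2n>n$, hence $(\Lambda^{\geq 2}W)^n=0$. Representing $\alpha$ by a based simplicial map $S^n\to\langle\Lambda V\rangle$, passing to its adjoint CDGA morphism $\Lambda V\to A_{PL}(S^n)$, and lifting through the quasi-isomorphism $\Lambda W\stackrel{\simeq}{\longrightarrow}A_{PL}(S^n)$ (possible because $\Lambda V$ is a Sullivan algebra) produces $\phi\colon\Lambda V\to\Lambda W$ with $\alpha=\langle\phi\rangle_{*}\gamma$ for the corresponding $\gamma\in\pi_n(\langle\Lambda W\rangle)$. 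As $\phi$ preserves augmentation ideals, $\phi(w)\in(\Lambda^{\geq 2}W)^n=0$. Applying the first two paragraphs to both $(\Lambda V,\alpha)$ and $(\Lambda W,\gamma)$ and using the naturality of the canonical morphism $m$, of $hur$, and of $\iota_n$ (Theorem~\ref{bij between homotopies}(i)), one gets $\oint_n\sigma(z)=\oint_n\tilde\sigma(\phi z)$ and $\oint_n\sigma(v)=\oint_n\tilde\sigma(\phi v)$, where $\tilde\sigma$ represents $\gamma$; subtracting gives $\oint_n\sigma(w)=\oint_n\tilde\sigma(\phi w)=0$, and therefore $\langle H^n(m_{|\Lambda V|})\beta,hur\,\alpha\rangle=\oint_n\sigma(v)=\langle\xi_n(\beta),\iota_n(\alpha)\rangle$. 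The final assertion $H^n(m_{|\Lambda V|})^{\sharp}\circ hur=\xi_n^{\sharp}\circ\iota_n$ is the transpose of this equality, since $\langle H^n(m_{|\Lambda V|})^{\sharp}(hur\,\alpha),\beta\rangle=\langle H^n(m_{|\Lambda V|})\beta,hur\,\alpha\rangle$ and $\langle\xi_n^{\sharp}(\iota_n\alpha),\beta\rangle=\langle\xi_n(\beta),\iota_n(\alpha)\rangle$ for all $\alpha,\beta$.

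The main obstacle is exactly the vanishing $\oint_n\sigma(w)=0$. It is not a consequence of the boundary behaviour of $\sigma$ alone, since a product of two positive-degree forms each restricting to zero on $\partial\Delta^n$ need not have zero integral; one must also use that $\sigma$ is a CDGA morphism out of $\Lambda V$ and that $w$ has wordlength $\geq 2$, and the cleanest way to exploit both is the reduction to the sphere model, where $(\Lambda^{\geq 2}W)^n=0$ annihilates $\phi(w)$ outright. A secondary point is to pin down the construction of $\iota_n$ in Theorem~\ref{bij between homotopies} so that the formula $\iota_n(\alpha)(v')=\oint_n\sigma(v')$ is used with the correct sign — any discrepancy would surface in the statement — so the mutual normalizations of $\iota_n$, $\oint_n$, $hur$, and the fundamental class of $S^n$ should be verified in the base case $\Lambda W$, where the whole claim reduces to $\langle H^n(m_{|\Lambda W|})[x],\, hur\,\iota_n^{-1}(x^{\sharp})\rangle=1$.
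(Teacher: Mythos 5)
This proposition is imported by the paper as Proposition~1.19 of \cite{FHT2} and used as a black box; the paper contains no proof of it, so there is nothing internal to compare your argument against. Judged on its own, your proof is essentially the standard one and is sound. You correctly reduce both sides to the simplicial pairing $\oint_n\sigma(z)$ via the commutative square of natural isomorphisms and the identity $(m_{\langle\Lambda V\rangle}(x))_{\sigma}=\sigma(x)$, and you correctly isolate the real crux, namely $\oint_n\sigma(w)=0$ for $w\in(\Lambda^{\geq 2}V)^n$; your remark that this does not follow from the boundary behaviour of $\sigma$ alone is accurate (a product of two positive-degree forms each vanishing on $\partial\Delta^n$ can integrate to something nonzero), and the reduction to the minimal model $\Lambda W$ of $S^n$, where $(\Lambda^{\geq 2}W)^n=0$ kills $\phi(w)$ outright, is the right mechanism.

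Two points deserve tightening. First, the lift $\phi$ through $\Lambda W\stackrel{\simeq}{\longrightarrow}A_{PL}(S^n)$ exists only up to homotopy of CDGA morphisms (that quasi-isomorphism is not surjective), so $\sigma$ and $\tilde\sigma\circ\phi$ are a priori only homotopic representatives of $\alpha$; your transport identities survive because the two quantities you move across, $\oint_n\sigma(z)$ for the cocycle $z$ and $\oint_n\sigma(v)=\iota_n(\alpha)(v)$, are each invariants of the homotopy class, whereas $\oint_n\sigma(w)$ for the non-closed $w$ is such an invariant only as their difference — this should be said explicitly, since it is exactly why the subtraction at the end is legitimate. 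Second, everything hinges on $\iota_n(\alpha)(v')=\oint_n\sigma(v')$ being the actual construction of $\iota_n$ in Theorem~\ref{bij between homotopies}; if $\iota_n$ is defined by a different device, that formula is itself the substance of the proposition and must be verified separately, along with the sign normalization you flag in the base case.
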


\begin{equation}\label{relation with hur}
\xymatrix@C=60pt{
\pi_n(|\Lambda V|) \ar[r]^{\iota_n} \ar[d]^{hur} & (V^n)^{\sharp} \ar[d]^{\xi_n^{\sharp}} \\
H_n(|\Lambda V|) \ar[r]^{H^n \left(m_{|\Lambda V|}\right)^{\sharp}} & [H^n(\Lambda V)]^{\sharp}.
}
\end{equation}

When $n=1$ and $\dim H^1(\Lambda V)<\infty$, the relation in \eqref{relation with hur} also commutes. To address the case that $\dim H^1(\Lambda V)=\infty$, we consider the map $\tau_1:(V^1)^{\sharp} \to \pi_1(|\Lambda V|)$, which is constructed in \cite{FHT2} as the inverse of $\iota_1$.

The definition of $\iota_1$ involves the simplicial set $\Delta[1]/\partial\Delta[1]$, which consists of two non-degenerate simplices $\overline{c_0}\in(\Delta[1]/\partial\Delta[1])_0$ and $\overline{c_1}\in(\Delta[1]/\partial\Delta[1])_1$. So its realization is $S^1$. In $A_{PL}(\Delta[1]/\partial\Delta[1])$, there exists an element denoted by $dt$, such that $\oint_1 \langle dt,\overline{c_1} \rangle = \oint_1 (dt)_{\overline{c_1}} = -1$. This notation comes from the identification of $A_{PL}(\Delta[1])$ with $(A_{PL})_1=\Lambda(t,dt)$ (see the Example in Section 1.4 of \cite{FHT2}).

For $f\in (V^1)^{\sharp}$, it defines a map $\varphi_f: \Lambda V\to A_{PL}(\Delta[1]/\partial\Delta[1])$ by $\varphi_f(v) = -\langle v,f \rangle dt$. Then the adjoint map $\langle \varphi_f \rangle: \Delta[1]/\partial\Delta[1] \to \langle \Lambda V \rangle$ of $\varphi_f$ realizes to a continuous map $|\varphi_f|:S^1\to|\Lambda V|$. Set $\tau_1(v)=|\varphi_f|$.

\begin{lem}\label{commutative for deg 1}
Let $\Lambda V$ be a minimal Sullivan algebra, $f\in (V^1)^{\sharp}$, and $\beta\in H^1(\Lambda V)$. Then
$$
\langle H^1 \left(m_{|\Lambda V|}\right)\beta, hur\circ\tau_1(f) \rangle = \langle \xi_1(\beta), f \rangle.
$$
In particular $H^1 \left(m_{|\Lambda V|}\right)^{\sharp}\circ hur\circ\tau_1 = \xi_1^{\sharp}$.
\end{lem}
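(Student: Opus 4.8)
The plan is to carry out in degree one the same computation that underlies Proposition 1.19 (the case $n\geq2$), the only genuinely new inputs being the explicit description of $\tau_1(f)$ through $\varphi_f$ and the normalization $\oint_1(dt)_{\overline{c_1}}=-1$. Write $K=\Delta[1]/\partial\Delta[1]$, so that $|K|=S^1$ and $C_1(K)=\mathbb{Q}\,\overline{c_1}$ with $\partial\overline{c_1}=0$; since $\lambda_K(\overline{c_1})$ is the standard loop on $S^1$, the generator of $H_1(|K|)\cong\mathbb{Q}$ is the image $\gamma$ of $[\overline{c_1}]$ under the isomorphism $H_*(C_*(K))\xrightarrow{\cong}H_*(|K|)$ induced by $\lambda_K$. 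By construction $\tau_1(f)$ is the homotopy class of $|\varphi_f|:S^1\to|\Lambda V|$, so by naturality of the Hurewicz map $hur\,\tau_1(f)=|\varphi_f|_*\gamma$, and the pairing identity $\langle\omega,g_*c\rangle=\langle g^*\omega,c\rangle$ gives
$$\langle H^1(m_{|\Lambda V|})\beta,\,hur\,\tau_1(f)\rangle=\langle |\varphi_f|^*H^1(m_{|\Lambda V|})\beta,\,\gamma\rangle,$$
where the bracket is taken under the natural identifications $H^*(A_{PL}(-))\cong H^*(-)$ supplied by $\oint$.

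Next I would isolate the algebraic identity
$$A_{PL}(\lambda_K)\circ A_{PL}(|\varphi_f|)\circ m_{|\Lambda V|}=\varphi_f:\ \Lambda V\longrightarrow A_{PL}(K).$$
It follows by composing three facts: naturality of $\lambda$ applied to $\langle\varphi_f\rangle:K\to\langle \Lambda V \rangle$, which yields $Sing(|\varphi_f|)\circ\lambda_K=\lambda_{\langle \Lambda V \rangle}\circ\langle\varphi_f\rangle$ and hence $A_{PL}(\lambda_K)\circ A_{PL}(|\varphi_f|)=A_{PL}(\langle\varphi_f\rangle)\circ A_{PL}(\lambda_{\langle \Lambda V \rangle})$; the lifting relation $A_{PL}(\lambda_{\langle \Lambda V \rangle})\circ m_{|\Lambda V|}=m_{\langle \Lambda V \rangle}$; and the triangle identity $A_{PL}(\langle\psi\rangle)\circ m_{\langle \Lambda V \rangle}=\psi$, valid for any $\psi:\Lambda V\to A_{PL}(L)$, which is immediate from $(m_{\langle \Lambda V \rangle}(x))_\sigma=\sigma(x)$ together with $\langle\psi\rangle(\sigma)(x)=\psi(x)_\sigma$.

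Taking cohomology of this identity and chasing the commuting square that ties together $H^*(|K|)$, $H^*(A_{PL}(|K|))$, $H^*(A_{PL}(K))$ and $H^*(C^*(K))$, the evaluation $\langle|\varphi_f|^*H^1(m_{|\Lambda V|})\beta,\gamma\rangle$ reduces to evaluating $H(\oint_K)\bigl(H^1(\varphi_f)\beta\bigr)$ on $[\overline{c_1}]$, i.e.
$$\langle H^1(m_{|\Lambda V|})\beta,\,hur\,\tau_1(f)\rangle=\oint_1\bigl(\varphi_f(z)\bigr)_{\overline{c_1}},$$
where $z\in(\Lambda V)^1$ is a cocycle representing $\beta$. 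Because every generator of $\Lambda V$ has positive degree, $(\Lambda V)^1=V^1$, so there are no decomposables in degree one and $z=\xi_1(\beta)=:v\in V^1$ with $dv=0$. Then $\varphi_f(v)=-\langle v,f\rangle\,dt$, so $\bigl(\varphi_f(v)\bigr)_{\overline{c_1}}=-\langle v,f\rangle\,(dt)_{\overline{c_1}}$, and $\oint_1(dt)_{\overline{c_1}}=-1$ yields $\oint_1\bigl(\varphi_f(v)\bigr)_{\overline{c_1}}=\langle v,f\rangle=\langle\xi_1(\beta),f\rangle$, which is the claimed formula. The final sentence follows by dualizing, letting $\beta$ and $f$ range freely: $H^1(m_{|\Lambda V|})^{\sharp}\circ hur\circ\tau_1=\xi_1^{\sharp}$.

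The main obstacle is not conceptual but bookkeeping. One must keep careful track of the three quasi-isomorphisms $\lambda_K$, $\oint_K$, $\oint_{Sing|K|}$ (and their counterparts for $|\Lambda V|$) that identify $A_{PL}$-cohomology with singular cohomology, confirm that the bracket in the statement is indeed meant under these identifications, and check that the two sign conventions — the minus sign in $\varphi_f(v)=-\langle v,f\rangle\,dt$ and the $-1$ in $\oint_1(dt)_{\overline{c_1}}=-1$ — together with the orientation of $[\overline{c_1}]$ as the generator of $H_1(S^1)$ combine to produce $+\langle\xi_1(\beta),f\rangle$ without a stray sign.
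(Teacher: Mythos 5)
Your proposal is correct and is essentially the paper's own argument: both reduce the pairing to $\oint_1\bigl(\varphi_f(v_\beta)\bigr)_{\overline{c_1}}$ and then use $\varphi_f(v)=-\langle v,f\rangle\,dt$ together with $\oint_1(dt)_{\overline{c_1}}=-1$. The only cosmetic difference is that you pull the cohomology class back to $K=\Delta[1]/\partial\Delta[1]$ via the identity $A_{PL}(\lambda_K)\circ A_{PL}(|\varphi_f|)\circ m_{|\Lambda V|}=\varphi_f$, whereas the paper pushes the homology class forward to $\langle\Lambda V\rangle$ as $[\psi_f]$ with $\psi_f=\langle\varphi_f\rangle(\overline{c_1})$; the two bookkeepings are dual and yield the same computation.
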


\begin{proof}
By the construction of $\tau_1$, $hur\circ\tau_1(f)$ is the homology class of the cycle $|\varphi_f|\circ q: I\to |\Lambda V|$, where $q: I \to S^1$ is the quotient map that identifies the two end points. This homology class can be identified with the class $[\psi_f]\in H_1(C_*(\langle \Lambda V \rangle))$, where $\psi_f$ is defined as $\langle \varphi_f \rangle(\overline{c_1}) \in\langle \Lambda V \rangle$. So
$$
\oint_1 \psi_f(v) = \oint_1 \langle \varphi_f(v),\overline{c_1} \rangle = -\langle v,f \rangle \oint_1 \langle dt,\overline{c_1} \rangle = \langle v,f \rangle.
$$

On the other hand, $H^1 \left(m_{|\Lambda V|}\right)\beta$ is identified with $H^1 \left(m_{\langle\Lambda V\rangle}\right)\beta \in H^1(A_{PL}(\langle \Lambda V \rangle))$. Let $v_{\beta}\in V^1$ be the unique representative of $\beta$. Then $H^1 \left(m_{\langle\Lambda V\rangle}\right)\beta$ can be identified with the cohomology class of $\oint_{\langle\Lambda V\rangle} m_{\langle\Lambda V\rangle} (v_{\beta}) \in C^1(\langle \Lambda V \rangle)$. Therefore, we have
\begin{align*}
\langle H^1 \left(m_{|\Lambda V|}\right)\beta, hur\circ\tau_1(f) \rangle &= \left\langle \oint_{\langle\Lambda V\rangle} m_{\langle\Lambda V\rangle} (v_{\beta}), \psi_f \right\rangle \\
&= \oint_1 \langle m_{\langle\Lambda V\rangle} (v_{\beta}), \psi_f \rangle \\
&= \oint_1 \psi_f(v_{\beta}) \\
&= \langle v_{\beta},f \rangle.
\end{align*}

By definition $\xi(\beta)=v_{\beta}$, which completes the proof.
\end{proof}

\subsection{Holonomy action}

Let $F\to E\to B$ be a fibration where $F$ and $B$ are path-connected. There exists an anti-homomorphism from $\pi_1(B)$ to the homotopy classes of homotopy equivalences of $F$, known as the holonomy action. This action induces a right $\pi_1(B)$-action on $H_*(F)$ and a left $\pi_1(B)$-action on $H^*(F)$. Moreover, when $F$ is simply connected, the holonomy action also induces a right $\pi_1(B)$-action on $\pi_*(F)$.

In certain cases, fibrations can be represented by relative Sullivan algebras. Additionally, the holonomy action has an algebraic representation.

Let $\Lambda Z$ be a Sullivan algebra and $W$ a graded vector space of positive degree. Suppose that the differential on $\Lambda Z$ can be extended to $\Lambda Z\otimes\Lambda W$, and $W$ is equipped with a filtration $W(k)$ such that $dW(k) \subset \Lambda Z\otimes\Lambda W(k-1)$. Then $\Lambda Z\otimes\Lambda W$ is called a relative Sullivan algebra. More generally, $\Lambda Z$ can be replaced by any connected CDGA.

If additionally $dW^n \subset dZ\otimes\Lambda W^{\leq n}$, where $W^{\leq n}$ denotes the subspace of $W$ spanned by the elements of degree $\leq n$, then the relative Sullivan algebra is called minimal. Observe that the quotient algebra $(\Lambda Z\otimes\Lambda W)/(\Lambda^{\geq 1} Z\otimes\Lambda W)$ is a Sullivan algebra, denoted as $(\Lambda W,\bar{d})$ or simply $\Lambda W$. If $\Lambda Z\otimes\Lambda W$ is a minimal relative Sullivan algebra, then $\Lambda W$ is a minimal Sullivan algebra.

Let $L_0$ denote the space of degree 0 whose suspension $sL_0=(Z^1)^{\sharp}$. This space carries a Lie algebra structure corresponding to the differential on $Z^1$, so it is called the fundamental Lie algebra of $\Lambda Z$. There exists a left $L_0$-action on $H^*(\Lambda W)$ defined as follows. For a cocycle $\Phi\in\Lambda W$,
$$
d(1\otimes\Phi) = \sum z_i\otimes\Theta_i+\Psi,
$$
where $z_i\in Z$, $\Theta_i\in\Lambda W$ and $\Psi\in\Lambda^{\geq 2} Z\otimes\Lambda W$. Define $x\cdot[\Phi]$ as the cohomology class of $\sum_i sx(z_i)\Theta_i$, where $sx\in(Z^1)^{\sharp}$ is considered as a function on $Z$ that vanishes on $Z^{\geq 2}$. It has been shown that this action is indeed a Lie algebra action, and $x\cdot[\Phi]$ is independent of the choice of representative $\Phi$ \cite[Lemma 4.8]{FHT2}.

As stated in Theorem \ref{bij between homotopies} (iii), the bijection $\text{exp}_L\circ s$ translates the Lie algebra $L_0$ into the group $\pi_1(\Lambda Z)$ when $\dim H^1(\Lambda Z)<\infty$. This group is isomorphic to $\pi_1(|\Lambda Z|)$. The above $L_0$-action also induces a group action on $H^*(\Lambda W)$ \cite[Proposition 4.5]{FHT2}, given by
$$
(\text{exp}_L(sx))\cdot[\Phi] = \sum_{i=0}^{\infty} \frac{1}{i!} x^i\cdot \left[\Phi \right].
$$
Here $x^i\cdot\Phi$ denotes the action of $x$ on $\Phi$ $i$ times, i.e. $x\cdot(x\cdot(\ldots(x\cdot\Phi)\ldots))$. By definition if $\Phi\in W(k)$, a representative of $x\cdot[\Phi]$ is spanned by $\Theta_i\in\Lambda W(k-1)$. Consequently, $x^i\cdot[\Phi]=0$ when $i$ is sufficiently large. Therefore, the sum $\sum_{i=0}^{\infty} \frac{1}{i!} x^i\cdot[\Phi]$ is finite, ensuring that the $L_0$-action is well-defined.

Given a minimal Sullivan algebra $\Lambda V$, the subalgebra $\Lambda V^{\leq n}$ can be viewed as a relative Sullivan algebra $\Lambda V^{\leq n-1}\otimes\Lambda V^n$ for $n\geq 2$. In this context, $H^n(\Lambda V^n)=V^n$ can be acted by the fundamental Lie algebra of $\Lambda V^{\leq n-1}$, and by the group $\pi_1(|\Lambda V^{\leq n-1})|$ if $\dim H^1(\Lambda V^{\leq n-1})<\infty$. Then it induces a right $\pi_1(|\Lambda V^{\leq n-1})|$-action on $(V^n)^{\sharp} = \pi_n(|\Lambda V^n|) = H_n(|\Lambda V^n|)$.

On the other hand, we have a fibration $|\Lambda V^n| \to |\Lambda V^{\leq n}| \to |\Lambda V^{\leq n-1}|$ according to Theorem \ref{bij between homotopies}. The holonomy action also induces a right $\pi_1(|\Lambda V^{\leq n-1})|$-action on $\pi_n(|\Lambda V^n|)$ and $H_n(|\Lambda V^n|)$. These actions actually agree.

\begin{thm}\label{holonomy}
Let $\Lambda V$ be a minimal Sullivan algebra with $\dim H^1(\Lambda V)<\infty$. Denote the $\pi_1(|\Lambda V^{\leq n-1}|)$-action on $\Lambda V^n$ of the relative Sullivan algebra $\Lambda V^{\leq n-1}\otimes\Lambda V^n$ by $\cdot$, and denote the holonomy action of the fibration $|\Lambda V^n| \to |\Lambda V^{\leq n}| \to |\Lambda V^{\leq n-1}|$ by $\bullet$. Then for each $\alpha\in\pi_1(|\Lambda V^{\leq n-1})|$, $\beta\in\pi_n(|\Lambda V^n|)$ and $v\in V^n$, the following equations hold.
\begin{align*}
\iota_n(\beta\bullet\alpha) &= \iota_n(\beta)\cdot\alpha, \\
hur(\beta\bullet\alpha) &= hur(\beta)\bullet\alpha, \\
[m_{|\Lambda V^n|}(\alpha\cdot v)] &= \alpha\bullet[m_{|\Lambda V^n|}(v)].
\end{align*}
Moreover, if $m_{|\Lambda V^n|}$ is a quasi-isomorphism, the holonomy action on $H^*(|\Lambda V^n|)$ is nilpotent.
\end{thm}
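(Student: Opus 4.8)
The plan is to prove the second displayed identity $hur(\beta\bullet\alpha)=hur(\beta)\bullet\alpha$ first (it is immediate), to deduce the first identity from the second and the third, to prove the third identity $[m_{|\Lambda V^n|}(\alpha\cdot v)]=\alpha\bullet[m_{|\Lambda V^n|}(v)]$ (the substantive one), and finally to read off the nilpotence assertion from the third. For the second identity: the holonomy action of $\alpha\in\pi_1(|\Lambda V^{\leq n-1}|)$ on $\pi_n(F)$ and on $H_n(F)$ of the fibre $F=|\Lambda V^n|$ is in both cases induced by one and the same homotopy self-equivalence $\mu_\alpha\colon F\to F$ (lift a loop representing $\alpha$), so since the Hurewicz homomorphism is a natural transformation, $hur\circ(\mu_\alpha)_*=(\mu_\alpha)_*\circ hur$, which is exactly the claim. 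For the first identity: apply the commuting square \eqref{relation with hur} to the minimal Sullivan algebra $\Lambda V^n$ itself; its differential vanishes, so $H^n(\Lambda V^n)=V^n$ and $\xi_n=\mathrm{id}$, and $|\Lambda V^n|$ is $(n-1)$-connected so its Hurewicz map is an isomorphism, whence the square reads $\iota_n=H^n(m_{|\Lambda V^n|})^\sharp\circ hur$. The third identity says that under $H^n(m_{|\Lambda V^n|})$ the holonomy $\mu_\alpha^*$ on $H^n(|\Lambda V^n|)$ corresponds to the left action $\alpha\cdot$ on $V^n$; dualizing via $\langle(\mu_\alpha)_*c,\eta\rangle=\langle c,\mu_\alpha^*\eta\rangle$ turns the holonomy on $H_n(|\Lambda V^n|)$ into the right action $\cdot\,\alpha$ on $(V^n)^\sharp$, so $\iota_n(\beta\bullet\alpha)=H^n(m_{|\Lambda V^n|})^\sharp(hur(\beta)\bullet\alpha)=(H^n(m_{|\Lambda V^n|})^\sharp hur(\beta))\cdot\alpha=\iota_n(\beta)\cdot\alpha$.

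To prove the third identity, fix $\alpha$. Since $\dim H^1(\Lambda V^{\leq n-1})=\dim H^1(\Lambda V)<\infty$, Theorem \ref{bij between homotopies}(iii) lets me write $\alpha=\exp_L(sx)$; put $f=sx\in(V^1)^\sharp\subset(V^{\leq n-1})^\sharp$. As in the construction of $\tau_1$, the assignment $v\mapsto-\langle v,f\rangle\,dt$ on $V^1$ and $v\mapsto0$ on $V^{\geq2}$ extends to a CDGA morphism $\varphi_f\colon\Lambda V^{\leq n-1}\to A_{PL}(\Delta[1]/\partial\Delta[1])$ (the relation $\varphi_f d=d\varphi_f$ uses $(dt)^2=0$), and $|\varphi_f|\colon S^1\to|\Lambda V^{\leq n-1}|$ represents $\alpha$. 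Pull the realization fibration $|\Lambda V^n|\to|\Lambda V^{\leq n}|\to|\Lambda V^{\leq n-1}|$ back along $|\varphi_f|$; on the algebraic side $\varphi_f$ turns the relative minimal Sullivan algebra $\Lambda V^{\leq n-1}\otimes\Lambda V^n$ into a relative Sullivan algebra $(A_{PL}(\Delta[1]/\partial\Delta[1])\otimes\Lambda V^n,\,D)$. Writing $d(1\otimes v)=\sum_iz_i\otimes\Theta_i+\Psi$ as in the definition of the algebraic action, a degree count shows that only the summands with $z_i\in Z^1$ survive $\varphi_f$ and that $\varphi_f$ annihilates $\Lambda^{\geq2}Z$ (again by $(dt)^2=0$), so $D(1\otimes v)=-\,dt\otimes(x\cdot v)$ for $v\in V^n$, where $x\cdot v=\sum_i\langle z_i,f\rangle\Theta_i=\sum_i sx(z_i)\Theta_i$ is precisely the operator from the definition of the algebraic action. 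Restricting over $\Delta^1$ and gauging away the twist by $v\mapsto\sum_{k\geq0}\frac{t^k}{k!}(x\cdot)^k v$ identifies this model with the constant relative Sullivan algebra having fibre $(\Lambda V^n,0)$; evaluating the change of coordinates at $t=0$ and $t=1$ exhibits $\mu_\alpha$ as homotopic to the realization of the CDGA automorphism $\exp(x\cdot):=\sum_{k\geq0}\frac1{k!}(x\cdot)^k$ of $\Lambda V^n$ (the minus signs in $\varphi_f$ and in $\oint_1\langle dt,\overline{c_1}\rangle$ are arranged so that this comes out as $\exp(x\cdot)$ and not its inverse). Since $\alpha\cdot v=\exp_L(sx)\cdot v=\exp(x\cdot)(v)$, and since naturality of $m$ gives $A_{PL}(\mu_\alpha)\circ m_{|\Lambda V^n|}\simeq m_{|\Lambda V^n|}\circ\exp(x\cdot)$, we conclude $\alpha\bullet[m_{|\Lambda V^n|}(v)]=\mu_\alpha^*[m_{|\Lambda V^n|}(v)]=[m_{|\Lambda V^n|}(\alpha\cdot v)]$.

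The step I expect to be the main obstacle is making the passage to the twisted model rigorous, i.e. showing that $|\varphi_f|^*|\Lambda V^{\leq n}|$ is modelled — compatibly with $m_{|\Lambda V^n|}$ on the fibre — by $(A_{PL}(\Delta[1]/\partial\Delta[1])\otimes\Lambda V^n,\,D)$. Geometric realization commutes with pullbacks of simplicial sets, so this space is the realization of the Kan fibration over $\Delta[1]/\partial\Delta[1]$ pulled back from $\langle\Lambda V^{\leq n}\rangle\to\langle\Lambda V^{\leq n-1}\rangle$, and naturality of $m_{\langle-\rangle}$ together with the lifting property behind $m_{|\Lambda V|}$ produces the comparison map from the twisted tensor product; but one must verify that it computes the monodromy precisely on the classes $[m_{|\Lambda V^n|}(v)]$, and keep the orientation conventions of Section 2 straight so that the sign in $D$ propagates correctly to $\exp(x\cdot)$. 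Everything else is bookkeeping.

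Finally, suppose $m_{|\Lambda V^n|}$ is a quasi-isomorphism. By the third identity $[m_{|\Lambda V^n|}]\colon H^*(\Lambda V^n)=\Lambda V^n\to H^*(|\Lambda V^n|)$ is a $\pi_1(|\Lambda V^{\leq n-1}|)$-equivariant algebra isomorphism, and $H^*(|\Lambda V^n|)$ is generated by $H^n=V^n$, on which $\alpha=\exp_L(sx)$ acts as $\exp(x\cdot)$; so it suffices to show the algebraic action on $\Lambda V^n$ is nilpotent. For the defining filtration $V^n(k)$ of the relative minimal Sullivan algebra one has $d(V^n(k))\subset\Lambda V^{\leq n-1}\otimes\Lambda V^n(k-1)$, so by the computation above the derivation $x\cdot$ maps $V^n(k)$ into $V^n(k-1)$; extending $V^n(\bullet)$ to an exhaustive multiplicative filtration of $\Lambda V^n$, the operator $x\cdot$, and hence $\exp(x\cdot)-\mathrm{id}$, strictly lowers it, so $\exp_L(sx)$ acts trivially on every associated graded quotient. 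Since each element of $\Lambda V^n$ lies in a finite filtration stage, the $\pi_1(|\Lambda V^{\leq n-1}|)$-action on $H^*(|\Lambda V^n|)$ is nilpotent.
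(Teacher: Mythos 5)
The paper itself does not prove this theorem: the remark immediately following it delegates everything to Theorem 4.2, Corollary 4.3 and Section 4.6 of \cite{FHT2}. Your proposal instead reconstructs the standard argument, and its architecture is sound. Deducing the second identity from naturality of the Hurewicz map, deducing the first from the second and third via the square \eqref{relation with hur} applied to $\Lambda V^n$ (where the induced differential vanishes for degree reasons, $\xi_n=\mathrm{id}$, and $hur$ is an isomorphism because $|\Lambda V^n|$ is $(n-1)$-connected), the computation $D(1\otimes v)=-dt\otimes(x\cdot v)$ after pushing forward along $\varphi_f$, and the trivialization $v\mapsto\sum_k\frac{t^k}{k!}(x\cdot)^kv$ are all correct, and the closing nilpotence argument (the derivation $x\cdot$ strictly lowers the filtration $V^n(k)$, both actions are by algebra automorphisms, and $H^*(|\Lambda V^n|)$ is generated by the image of $V^n$ once $m_{|\Lambda V^n|}$ is a quasi-isomorphism) is fine.

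However, the step you yourself flag as ``the main obstacle'' is a genuine gap, and it is exactly where the substance of the theorem lives. You need to show that the topological holonomy of the realized fibration $|\Lambda V^n|\to|\Lambda V^{\leq n}|\to|\Lambda V^{\leq n-1}|$ along the loop $|\varphi_f|$ acts, on the classes $[m_{|\Lambda V^n|}(v)]$, as the monodromy of the twisted model $\bigl(A_{PL}(\Delta[1]/\partial\Delta[1])\otimes\Lambda V^n,\,D\bigr)$. That requires identifying the realization of the simplicial pullback with the topological pullback up to fibre homotopy equivalence, producing a zig-zag of quasi-isomorphisms over the base that restricts to $m_{|\Lambda V^n|}$ on the fibre, and settling the orientation bookkeeping that decides between $\exp(x\cdot)$ and its inverse (the holonomy is an anti-homomorphism and $\oint_1\langle dt,\overline{c_1}\rangle=-1$, so a sign error here would falsify the three displayed identities, though not the nilpotence claim). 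Asserting that ``the minus signs are arranged so that this comes out as $\exp(x\cdot)$'' is a plausibility statement, not a verification. Since this fibrewise comparison is precisely the content of the results the paper cites, the honest options are either to cite \cite{FHT2} as the paper does, or to carry out that comparison in full; as written, your argument reduces the theorem to its hardest ingredient without proving it.
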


\begin{rmk}
This theorem is a special case of several results in \cite{FHT2}, including Theorem 4.2, Corollary 4.3, and the discussion in Section 4.6.
\end{rmk}

We end this section by giving the relationship between fibrations and relative Sullivan algebras.

\begin{thm}[Theorem 5.1 and Corollary 5.1 of \cite{FHT2}]\label{fibration and relative Sullivan}
Suppose that $F\to E\to B$ is a fibration satisfying the following conditions.
\begin{itemize}
\item $F$, $E$ and $B$ are path connected.
\item The holonomy action of $\pi_1(B)$ on $H^*(F)$ is nilpotent.
\item Either $H^*(F)$ or $H^*(B)$ is of finite type.
\end{itemize}
Let $\Lambda Z$ be a minimal Sullivan model of $B$, and let $\Lambda Z\otimes\Lambda W$ be a minimal relative Sullivan algebra such that the following diagram commutes.
$$
\xymatrix{
\Lambda Z \ar[r] \ar[d]^{m_B}_{\simeq} & \Lambda Z\otimes\Lambda W \ar[r] \ar[d]^{m_E} & \Lambda W \ar[d]^{m_F} \\
A_{PL}(B) \ar[r] & A_{PL}(E) \ar[r] & A_{PL}(F)
}
$$

(i) If $m_E$ is a quasi-isomorphism, then so is $m_F$.

(ii) If $m_F$ is a quasi-isomorphism, then so is $m_E$.
\end{thm}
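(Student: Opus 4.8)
The plan is to isolate one genuinely topological input — that $A_{PL}$ sends the fibration to a homotopy-cocartesian square of CDGAs — and to deduce both statements from it by homological algebra over the connected CDGA $A_{PL}(B)$. First, because $m_B:\Lambda Z\xrightarrow{\simeq}A_{PL}(B)$ is a quasi-isomorphism and $\Lambda Z\otimes\Lambda W$ is semifree over $\Lambda Z$ (being a relative Sullivan algebra), base change gives a quasi-isomorphism $\Lambda Z\otimes\Lambda W\xrightarrow{\simeq}A_{PL}(B)\otimes_{\Lambda Z}(\Lambda Z\otimes\Lambda W)=A_{PL}(B)\otimes\Lambda W$ compatible with the projections onto $\Lambda W$. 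So we may work over $A_{PL}(B)$: we have a semifree extension $m'_E:A_{PL}(B)\otimes\Lambda W\to A_{PL}(E)$ over $A_{PL}(B)$ whose reduction modulo $A_{PL}(B)^{\ge1}$ is $m_F:\Lambda W\to A_{PL}(F)$ (the reduction is $m_F$ because $F\to E\to B$ composes to a nullhomotopic map, so $A_{PL}(B)^{\ge1}$ maps to $0$ in $A_{PL}(F)$). Since $A_{PL}(B)\otimes\Lambda W$ is $A_{PL}(B)$-semifree, this reduction computes the derived tensor product: $\Lambda W\simeq\mathbb Q\otimes^{\mathbb L}_{A_{PL}(B)}\bigl(A_{PL}(B)\otimes\Lambda W\bigr)$.

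The crux is the identification $A_{PL}(F)\simeq\mathbb Q\otimes^{\mathbb L}_{A_{PL}(B)}A_{PL}(E)$, compatibly with $m_F$ and $m'_E$. To prove it, choose any semifree $A_{PL}(B)$-model $A_{PL}(B)\otimes\Lambda W_0\xrightarrow{\simeq}A_{PL}(E)$ of the total space (such a relative Sullivan model of the morphism $A_{PL}(B)\to A_{PL}(E)$ exists by general CDGA theory) and show that its fiber $\Lambda W_0\to A_{PL}(F)$ is a quasi-isomorphism; granting this, $\mathbb Q\otimes^{\mathbb L}_{A_{PL}(B)}A_{PL}(E)\simeq\mathbb Q\otimes^{\mathbb L}_{A_{PL}(B)}(A_{PL}(B)\otimes\Lambda W_0)=\Lambda W_0\simeq A_{PL}(F)$. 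That the fiber models $F$ is obtained by comparing, through the fiber map, two first-quadrant spectral sequences with abutment $H^*(E)$: the algebraic one gotten by filtering $A_{PL}(B)\otimes\Lambda W_0$ by the powers of $A_{PL}(B)^{\ge1}$, with $E_2$-term $H^p\!\left(B;\mathcal H^q(\Lambda W_0)\right)$ for the local system afforded by the $L_0$-action of the previous subsection, and the Serre spectral sequence of $F\to E\to B$, with $E_2$-term $H^p\!\left(B;\mathcal H^q(F)\right)$ for the holonomy local system. The comparison map is an isomorphism on abutments, and $m_B$ together with the general form of Theorem \ref{holonomy} identifies the two coefficient systems, so Zeeman's comparison theorem yields an isomorphism $H^*(\Lambda W_0)\xrightarrow{\cong}H^*(F)$.

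With the crux in hand both parts are immediate. For (i): if $m_E$, equivalently $m'_E$, is a quasi-isomorphism, then applying $\mathbb Q\otimes^{\mathbb L}_{A_{PL}(B)}-$ and using the two identifications above gives that $m_F$ is a quasi-isomorphism. For (ii): if $m_F$ is a quasi-isomorphism, let $C$ be the mapping cone of $m'_E$, an $A_{PL}(B)$-module; then $\mathbb Q\otimes^{\mathbb L}_{A_{PL}(B)}C$ is the cone of $\mathbb Q\otimes^{\mathbb L}_{A_{PL}(B)}m'_E\simeq m_F$, hence acyclic, and since the augmentation ideal $A_{PL}(B)^{\ge1}$ sits in positive degrees and $H^*(C)$ is bounded below, a Nakayama-type argument (examine the lowest nonzero cohomological degree of $C$, using the Tor spectral sequence) forces $H^*(C)=0$, so $m'_E$ and therefore $m_E$ is a quasi-isomorphism. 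The main obstacle is the crux of the second paragraph: making the algebraic and the Serre spectral sequences genuinely comparable. This is exactly where the three hypotheses are spent — path-connectedness so that the $A_{PL}$-models of $B$ and $F$ behave well, nilpotence of the holonomy action so that the Serre $E_2$-term is the stated local-coefficient cohomology and matches the algebraic one, and finite type of $H^*(F)$ or $H^*(B)$ for the K\"unneth and convergence inputs underlying Zeeman's theorem — and a secondary, more bookkeeping obstacle is verifying that the algebraic $\pi_1(B)$-local system $\mathcal H^*(\Lambda W_0)$ really coincides with $\mathcal H^*(F)$, which one checks first for the elementary extensions $\Lambda V^{\le n}$, where it is Theorem \ref{holonomy}, and then propagates up the Sullivan filtration.
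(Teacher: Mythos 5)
The paper does not actually prove this statement: it is imported wholesale as Theorem~5.1 and Corollary~5.1 of \cite{FHT2}, so there is no in-paper argument to measure yours against. Judged on its own terms, your architecture --- base-change along $m_B$ to a semifree extension of $A_{PL}(B)$, establish the fiber identification $A_{PL}(F)\simeq\mathbb{Q}\otimes^{\mathbb{L}}_{A_{PL}(B)}A_{PL}(E)$, then get (i) by applying $\mathbb{Q}\otimes^{\mathbb{L}}_{A_{PL}(B)}-$ and (ii) by a derived Nakayama argument on the cone (valid here because $B$ is path connected, so $H^0(A_{PL}(B))=\mathbb{Q}$ and everything is bounded below) --- is the standard Grivel--Halperin--Thomas strategy, and those peripheral steps are sound.

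The genuine gap is that your self-identified ``crux'' is essentially the theorem itself, and the sketch does not supply it. Identifying the $E_2$-term of the base-degree filtration of $A_{PL}(B)\otimes\Lambda W_0$ with $H^p\bigl(B;\mathcal{H}^q(F)\bigr)$ has two unproved components. First, Theorem~\ref{holonomy} as stated in this paper concerns only the realization fibrations $|\Lambda V^n|\to|\Lambda V^{\le n}|\to|\Lambda V^{\le n-1}|$ of a minimal Sullivan algebra; it says nothing about an arbitrary fibration of spaces equipped with a chosen relative model, so it cannot be invoked to match the $L_0$-action on $H^*(\Lambda W_0)$ with the holonomy action on $H^*(F)$, and the proposed ``propagation up the Sullivan filtration'' is precisely the hard induction you would have to carry out. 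Second, even granting that comparison, $\pi_1(B)\to\pi_1(\Lambda Z)$ is only a Malcev completion, so the two local-coefficient cohomologies need not agree a priori; this is exactly where the nilpotence hypothesis is spent, via filtering $\mathcal{H}^*(F)$ by powers of the augmentation ideal of $\mathbb{Q}[\pi_1(B)]$ to reduce to trivial coefficients, and where the finite-type hypothesis enters to make the (suitably generalized) Zeeman comparison theorem applicable --- the classical version assumes simple coefficients and finite type throughout, which you do not have. Until those two points are established, the argument is circular at its center, since the statement ``the fiber of the relative model is a model of the fiber'' under these hypotheses is the content of the cited Theorem~5.1 of \cite{FHT2}.
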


\subsection{Sullivan spaces}
Given a path-connected space $X$, there exists a unique minimal Sullivan algebra $\Lambda V$ along with a quasi-isomorphism $\phi:\Lambda V\stackrel{\simeq}{\longrightarrow} A_{PL}(X)$. This minimal Sullivan algebra is unique up to isomorphism, and is called the \textbf{minimal Sullivan model} of $X$. The quasi-isomorphism $\phi$ induces morphisms on homotopy groups $\pi_n(\phi):\pi_n(X) \to \pi_n(\Lambda V)$, where $\pi_1(\phi)$ is defined only when $\dim H^1(X)<\infty$.

If $X$ is simply connected and $H^*(X)$ is of finite type, then the maps
$$
\pi_n(\phi)\otimes\mathbb{Q}: \pi_n(X)\otimes\mathbb{Q} \stackrel{\cong}{\longrightarrow} \pi_n(\Lambda V)
$$
are isomorphisms for $n\geq 2$ (Theorem 10.1 of \cite{Sullivan}, also see Theorem 1.6 of \cite{FHT2}).

When $X$ is not simply connected, $\pi_n(\phi)\otimes\mathbb{Q}$ are not necessarily isomorphic. If $\pi_n(X)\otimes\mathbb{Q}$ can still be reflected by the minimal Sullivan algebra, we call $X$ a Sullivan space. Precisely,

\begin{defn}\label{def of Sullivan space}
A path connected space $X$ is called a \textbf{Sullivan space} if the following conditions are satisfied.
\begin{enumerate}
\item $\dim H^1(X)<\infty$.
\item The universal cover $\widetilde{X}$ of $X$ has finite type cohomology.
\item Given a minimal Sullivan model $\phi:\Lambda V\to A_{PL}(X)$, $\pi_n(\phi)\otimes\mathbb{Q}$ are isomorphisms for $n\geq 2$.
\end{enumerate}
\end{defn}

The following theorem outlines some properties of Sullivan spaces, and can be used to determine whether a given space is a Sullivan space.

\begin{thm}[Theorem 7.2 of \cite{FHT2}]\label{thm 7.2 FHT}
Suppose $X$ is a CW complex. Let $F\to E\to B$ be its classifying space fibration, where $F=\widetilde{X}$ is the universal cover of $X$, $E$ is a space homotopy equivalent to $X$, and $B=K(\pi_1(X),1)$ is the classifying space of $\pi_1(X)$. The minimal model $m_E:\Lambda V\stackrel{\simeq}{\longrightarrow} A_{PL}(E)$ induces the following commutative diagram.
$$
\xymatrix{
\Lambda V^1 \ar[r] \ar[d]^{m_B} & \Lambda V \ar[r] \ar[d]^{m_E}_{\simeq} & \Lambda V^{\geq 2} \ar[d]^{m_F} \\
A_{PL}(B) \ar[r] & A_{PL}(E) \ar[r] & A_{PL}(F)
}
$$

If $X$ is a Sullivan space, the following statements all holds.
\begin{enumerate}[(i)]
\item $m_B$ is a quasi-isomorphism and $H^*(B)$ has finite type.
\item $m_F$ is a quasi-isomorphism.
\item $\dim H^1(X)<\infty$ and $H^*(\widetilde{X})=H^*(F)$ are of finite type.
\item $H^*(X)$ is of finite type.
\item $\pi_1(X)$ acts nilpotently via covering transformation on each $H^k(\widetilde{X})$, i.e. the holonomy action of $\pi_1(B)$ on each $H^k(F)$ is nilpotent.
\end{enumerate}

Conversely, $X$ is a Sullivan space if one of the following collections of statements holds.

1. (i)(ii)(iii) \qquad 2. (i)(iii)(v) \qquad 3. (i)(iv)(v).
\end{thm}

As the following theorem states, for a Sullivan algebras $\Lambda V$ satisfying certain conditions, its realization $|\Lambda V|$ being a Sullivan space is equivalent to $m_{|\Lambda V|}: \Lambda V \to A_{PL}(|\Lambda V|)$ being a quasi-isomorphism. In \cite{FHT2}, F\'elix, Halperin, and Thomas posed the question of whether this equivalence holds for all minimal Sullivan algebras.

\begin{thm}[Theorem 7.8 of \cite{FHT2}]\label{thm 7.8 FHT}
For any minimal Sullivan algebra $\Lambda V$ the following conditions are equivalent.
\begin{enumerate}[(i)]
\item $H^1(\Lambda V)$ and each $V^i$ for $i\geq 2$ are finite dimensional, and $m_{|\Lambda V|}: \Lambda V \to A_{PL}(|\Lambda V|)$ is a quasi-isomorphism.
\item $|\Lambda V|$ is a Sullivan space.
\end{enumerate}
\end{thm}

In particular, if $\Lambda V$ is of finite type, then $|\Lambda V|$ is a Sullivan space.

\begin{prop}[Theorem 8.1 of \cite{Sullivan}, also see Theorem 5.4 of \cite{FHT2}]\label{finite type quasi-iso}
Let $\Lambda V$ be a minimal Sullivan algebra of finite type. Then $|\Lambda V|$ is a Sullivan space and $m_{|\Lambda V|}: \Lambda V \to A_{PL}(|\Lambda V|)$ is a quasi-isomorphism.
\end{prop}

\section{Cohomologies of a minimal Sullivan algebra and its realization}
In this section, we prove Theorem \ref{intro cohomology of minimal Sullivan and realization}, i.e., we find an equivalent condition for statement (i) of Theorem \ref{thm 7.8 FHT} without the restrict on dimensions.

\begin{lem}[Exercise 3.5.2 of \cite{weibel}]\label{ex 3.5.2 of weibel}
Let
$$
\ldots \to V_i \to V_{i-1} \to \ldots \to V_1 \to V_0
$$
be a tower of finite dimensional vector spaces over a field. Then the derived functor on the inverse limit satisfies
$$
\varprojlim_i\textstyle{^1} \, V_i=0.
$$
\end{lem}

\begin{lem}[Theorem 3.5.8 of \cite{weibel}]\label{thm 3.5.8 of weibel}
Let
$$
\ldots \to C_i \to C_{i-1} \to \ldots \to C_1 \to C_0
$$
be a tower of chain complexes satisfying the Mittag-Leffler condition (e.g., if all maps $C_i \to C_{i-1}$ are surjective). Let $\displaystyle C=\varprojlim_i C_i$. Then there exists an exact sequence for each $n$:
$$
0 \to \varprojlim_i\textstyle{^1} \displaystyle\, H_{n+1}(C_i) \to H_n(C) \to \varprojlim_i H_n(C_i) \to 0.
$$
\end{lem}

\begin{thm}\label{homology finite type sufficient}
Let $\Lambda V$ be a minimal Sullivan algebra with finite type cohomology. Then $m_{|\Lambda V|}: \Lambda V \to A_{PL}(|\Lambda V|)$ is a quasi-isomorphism.
\end{thm}
\begin{proof}
Equip $V$ with the filtration $V(r)$ as follows. Set $V(-1)=0$ and define
$$
V(r) = \{ v\in V \mid dv\in \Lambda V(r-1) \} \text{ for } r\geq 0.
$$
Since $\Lambda V$ is minimal, all coboundaries have wordlength at least 2. So the natural map $V(0)\to H^*(\Lambda V(0))$, which sends cocycles to its cohomology classes, is injective. It follows that $V(0)$ and $\Lambda V(0)$ are of finite type. Using induction on $r$, we can prove that all $\Lambda V(r)$ are of finite type.

By Proposition \ref{finite type quasi-iso}, $m_{|\Lambda V(r)|}: \Lambda V(r) \stackrel{\simeq}{\longrightarrow} A_{PL}(|\Lambda V(r)|)$ are quasi-isomorphisms for all $r$, and all $H^*(|\Lambda V(r)|)$ have finite type. Consequently, the inclusions $\Lambda V(0) \to \Lambda V(1) \to \ldots$ induce a on each degree $n$,
$$
\ldots \to H_n(|\Lambda V(r)|) \to H_n(|\Lambda V(r-1)|) \to \ldots \to H_n(|\Lambda V(1)|) \to H_n(|\Lambda V(0)|)
$$
by taking the dual on cohomologies. According to \eqref{identify cohomologies} and Lemma \ref{ex 3.5.2 of weibel}, we have
$$
\varprojlim_r\textstyle{^1} \displaystyle\, H_n(C_*(\langle\Lambda V(r)\rangle)) = \varprojlim_r\textstyle{^1} \, H_n(|\Lambda V(r)|)=0.
$$

We claim that the map $C_*(\langle\Lambda V(r)\rangle) \to C_*(\langle\Lambda V(r-1)\rangle)$ is surjective. Recall that $\langle\Lambda V(r)\rangle_n$ consists of morphisms from $\Lambda V(r)$ to $(A_{PL})_n$. The map $\langle\Lambda V(r)\rangle_n \to \langle\Lambda V(r-1)\rangle_n$ sends these morphisms to their restrictions to $\Lambda V(r-1)$. Choose a basis $\{[v_j]\}$ of $V(r)/V(r-1)$ and take representative $v_j\in V(r)$ for each $v_j$. Given a morphism $f:\Lambda V(r-1)\to(A_{PL})_n$, $f(dv_j)$ is a cocycle in $(A_{PL})_n$ of positive degree. It must be a coboundary because $H^{\geq 1}((A_{PL})_n)=0$. Thus, we can define $f(v_j)$ to satisfy $df(v_j)=f(dv_j)$. This allows us to extend $f$ to $\Lambda V(r)$, proving the claim.

Since $V=\displaystyle\varinjlim_r V(r)$, we have $C_*(\langle\Lambda V\rangle) = \displaystyle\varprojlim_r C_*(\langle\Lambda V(r)\rangle)$. Applying Lemma \ref{thm 3.5.8 of weibel}, we obtain $H_n(C_*(\langle\Lambda V\rangle)) = \displaystyle\varprojlim_r H_n(C_*(\langle\Lambda V(r)\rangle))$. Then taking the dual, we get $H^n(|\Lambda V|) = \displaystyle\varinjlim_r H^n(|\Lambda V(r)|)$.

Let $x\in\Lambda V$ be a cocycle such that $m_{|\Lambda V|}(x)$ is a coboundary in $A_{PL}(|\Lambda V|)$. Then $x$ must be contained in some $\Lambda V(r)$. As discussed, $H^n(|\Lambda V|)$ is the direct limit of $H^n(|\Lambda V(r)|)$. So there exists some $r'\geq r$ such that $m_{|\Lambda V(r')|}(x)$ is a coboundary in $A_{PL}(|\Lambda V(r')|)$. Since $m_{|\Lambda V(r')|}$ is a quasi-isomorphism, $x$ has to be a coboundary in $\Lambda V(r')$, and hence also a coboundary in $\Lambda V$. This proves that $m_{|\Lambda V|}$ is injective on cohomologies.

On the other hand, $H^n(|\Lambda V|) = \displaystyle\varinjlim_r H^n(|\Lambda V(r)|)$ also implies that every $[y]\in H^n(|\Lambda V|)$ must appear at some $H^n(|\Lambda V(r)|)$. That is, there exists a cocycle $y_r\in A_{PL}(|\Lambda V(r)|)$ such that the induced morphism $H^n(|\Lambda V(r)|) \to H^n(|\Lambda V|)$ sends the cohomology class $[y_r]$ to $[y]$. Consider the following commutative diagram.
$$ 
\xymatrix{
H^n(\Lambda V(r)) \ar[d]^{H(m_{|\Lambda V(r)|})}_{\cong} \ar[r] & H^n(\Lambda V) \ar[d]^{H(m_{|\Lambda V|})} \\
H^n(|\Lambda V(r)|) \ar[r] & H^n(|\Lambda V|)
}
$$
There exists some $[x_r]$ that maps to $[y_r]$ via the isomorphism $H(m_{|\Lambda V(r)|}): H^*(\Lambda V(r)) \to H^*(|\Lambda V(r)|)$. Let $[x]\in H^*(\Lambda V)$ denote the image of $[x_r]$ in the direct limit. Then $H(m_{|\Lambda V|})$ maps $[x]$ to $[y]$. Thus, $m_{|\Lambda V|}$ is surjective on cohomologies.
\end{proof}

The following example provides a counterexample to the first part of Question \ref{FHT question}.

\begin{ex}\label{eg S1 vee S2}
Let $X=S^1\vee S^2$ and let $\Lambda V$ be its minimal Sullivan model. Then $H^*(\Lambda V)=H^*(X)$ is finite dimensional. By Theorem \ref{homology finite type sufficient}, $m_{|\Lambda V|}$ is a quasi-isomorphism, and the induced map $|\Lambda V|\to X$ is isomorphic on cohomologies.

However, $|\Lambda V|$ is not a Sullivan space because the cohomology of its universal cover, which is homotopy equivalent to $|\Lambda V^{\geq 2}|$, does not have finite type. Observe that $V^1=\langle x\rangle$, and $V^2=\langle y_0,y_1,\ldots,y_i,\ldots\rangle$, where $dx=dy_0=0$, $dy_i=xy_{i-1}$ for $i\geq 1$. Consequently, $\pi_2(|\Lambda V|)\otimes\mathbb{Q}$ is infinite dimensional, and so is $H_2(|\Lambda V^{\geq 2}|)$. This contradicts statement (iii) of Theorem \ref{thm 7.2 FHT}. Alternatively, the differential on $\Lambda V$ induces a non-nilpotent action on $H^2(|\Lambda V^{\geq 2}|)$ by $\pi_1(|\Lambda V|)$, which contradicts statement (v) of that theorem.
\end{ex}

\begin{thm}\label{homology finite type}
Let $\Lambda V$ be a minimal Sullivan algebra. If there exists a quasi-isomorphism $\Lambda V \stackrel{\simeq}{\longrightarrow} A_{PL}(|\Lambda V|)$, then $H^*(\Lambda V)=H^*(|\Lambda V|)$ is of finite type.
\end{thm}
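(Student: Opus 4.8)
The plan is as follows. A quasi-isomorphism $\Lambda V\stackrel{\simeq}{\longrightarrow}A_{PL}(|\Lambda V|)$ induces an isomorphism $H^*(\Lambda V)\stackrel{\cong}{\longrightarrow}H^*(A_{PL}(|\Lambda V|))\cong H^*(|\Lambda V|)$, which is already the asserted identification; so only the finite-type statement requires an argument. If $V$ is of finite type then $\Lambda V$, hence $H^*(\Lambda V)$, is of finite type. I shall therefore assume $\dim V^n=\infty$ for some $n$, fix the smallest such $n$, and derive a contradiction with the existence of \emph{any} quasi-isomorphism $\Lambda V\to A_{PL}(|\Lambda V|)$. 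The guiding idea is that such a quasi-isomorphism forces $H^k(\Lambda V)\cong H^k(|\Lambda V|)$ for every $k$, whereas for a suitable $k$ the right-hand side is strictly larger as a $\mathbb{Q}$-vector space: forming realizations introduces duals, and for every infinite-dimensional $\mathbb{Q}$-vector space $W$ one has $\dim_{\mathbb{Q}}W^{\sharp}=2^{\dim_{\mathbb{Q}}W}>\dim_{\mathbb{Q}}W$ (Erd\H{o}s--Kaplansky).

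The algebraic side is easy to bound. When $n\geq 2$, every decomposable element of $(\Lambda V)^n$ is a product of generators of degrees $<n$, and these span a finite-dimensional space; hence $(\Lambda V)^n=V^n\oplus F$ with $\dim F<\infty$, so $\dim H^n(\Lambda V)\leq\dim(\Lambda V)^n=\dim V^n$. For the topological side I would refine the spectral-sequence computation in the proof of Theorem~\ref{infinite cohomology}, keeping track of cardinalities. There $m_{|\Lambda V^{\leq n-1}|}$ is a quasi-isomorphism and $\pi_1(|\Lambda V^{\leq n-1}|)$ is countable; in the Serre spectral sequence of $|\Lambda V^n|\to|\Lambda V^{\leq n}|\to|\Lambda V^{\leq n-1}|$ the invariant functionals $\varphi_{\lambda}$ built there, one for each $\pi_1$-orbit component of $H_n(|\Lambda V^n|)=(V^n)^{\sharp}$, are linearly independent; since $(V^n)^{\sharp}$ is uncountable-dimensional and each component is at most countable-dimensional, there are at least $\dim(V^n)^{\sharp}=2^{\dim V^n}$ of them, so $\dim E_2^{0,n}\geq 2^{\dim V^n}$. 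As the only differential leaving $E_r^{0,n}$ has image in the finite-dimensional group $H^{n+1}(|\Lambda V^{\leq n-1}|)$, we get $\dim H^n(|\Lambda V^{\leq n}|)\geq 2^{\dim V^n}$; and since the fibre of $|\Lambda V^{>n}|\to|\Lambda V|\to|\Lambda V^{\leq n}|$ is $n$-connected, the term $E_2^{n,0}=H^n(|\Lambda V^{\leq n}|)$ survives, whence $\dim H^n(|\Lambda V|)\geq 2^{\dim V^n}>\dim V^n\geq\dim H^n(\Lambda V)$ --- contradicting $H^n(\Lambda V)\cong H^n(|\Lambda V|)$.

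It remains to treat $n=1$, i.e.\ $\dim V^1=\infty$. If $\dim H^1(\Lambda V)=\infty$: since $\xi_1\colon H^1(\Lambda V)\to V^1$ is injective, Lemma~\ref{commutative for deg 1} makes $H^1(m_{|\Lambda V|})^{\sharp}\colon H_1(|\Lambda V|)\to(H^1(\Lambda V))^{\sharp}$ surjective, so $\dim H^1(|\Lambda V|)=\dim(H_1(|\Lambda V|))^{\sharp}\geq\dim H_1(|\Lambda V|)\geq\dim(H^1(\Lambda V))^{\sharp}>\dim H^1(\Lambda V)$, a contradiction. If $\dim H^1(\Lambda V)<\infty$: the fibration $|\Lambda V^{\geq 2}|\to|\Lambda V|\to|\Lambda V^1|$ has simply connected fibre, so $\dim H^2(|\Lambda V|)\geq\dim H^2(|\Lambda V^1|)$, and $|\Lambda V^1|\simeq B\pi_1(|\Lambda V^1|)$ with $|\pi_1(|\Lambda V^1|)|=|(V^1)^{\sharp}|=2^{\dim V^1}$; the Hu--Heller description of $H^2(\pi_1(|\Lambda V^1|);\mathbb{Q})$ used in the proof of Theorem~\ref{infinite cohomology} then gives $\dim H^2(|\Lambda V|)\geq 2^{\dim V^1}$, whereas $\dim H^2(\Lambda V)\leq\dim(\Lambda V)^2=\dim V^2+\dim\Lambda^2V^1=\dim V^2+\dim V^1$. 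So a quasi-isomorphism would force $\dim V^2\geq 2^{\dim V^1}$; one then passes to degree $2$ and argues similarly, with $V^2$ now the infinite-dimensional generating space, and I expect this descent on the first ``too large'' degree to terminate in a contradiction, though the bookkeeping here is more delicate.

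The step I expect to be the main obstacle is exactly this sharpening of Theorem~\ref{infinite cohomology}: upgrading its conclusion ``$\dim H^n(|\Lambda V|)=\infty$'' to the precise bound $\dim H^n(|\Lambda V|)\geq 2^{\dim V^n}$ --- which requires counting how many countable $\pi_1$-orbit components are needed to fill $(V^n)^{\sharp}$ and invoking the Erd\H{o}s--Kaplansky dimension jump --- together with, in the case $\dim V^1=\infty$, controlling the cohomology of the uncountable group $\pi_1(|\Lambda V^1|)$ precisely enough to outrun the (possibly enormous) algebra $\Lambda V$, including the inductive bookkeeping just indicated. The remaining steps are direct applications of results already established above.
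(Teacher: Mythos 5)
Your proposal does not follow the paper's argument: you set out to prove the stronger statement that $V$ itself is of finite type, by sharpening the spectral-sequence computation of Theorem~\ref{infinite cohomology} into the quantitative bound $\dim H^n(|\Lambda V|)\geq 2^{\dim V^n}$ and playing it against $\dim H^n(\Lambda V)\leq \dim V^n$. For the smallest bad degree $n\geq 2$ this works: there $\dim V^{<n}<\infty$, so $\pi_1(|\Lambda V^{\leq n-1}|)$ is countable, each orbit component $H_\lambda$ of $(V^n)^{\sharp}$ is at most countable-dimensional, the Erd\H{o}s--Kaplansky count gives $2^{\dim V^n}$ independent invariant functionals, and the only differential out of $E_r^{0,n}$ lands in a finite-dimensional group. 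Your sub-case $n=1$ with $\dim H^1(\Lambda V)=\infty$ is also fine and is essentially the paper's own degree-one argument via Lemma~\ref{commutative for deg 1}.

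The genuine gap is the remaining sub-case $\dim V^1=\infty$, $\dim H^1(\Lambda V)<\infty$, which you explicitly leave as an expected-to-work ``descent.'' It does not close as proposed. The inequality $\dim H^2(|\Lambda V|)\geq 2^{\dim V^1}$ only forces $\dim V^2\geq 2^{\dim V^1}$, and the machinery you would need to iterate in degree $2$ is exactly the Case-1 machinery of Theorem~\ref{infinite cohomology}, which is unavailable here: with $\dim V^1=\infty$ the map $m_{|\Lambda V^1|}$ is not a quasi-isomorphism, $\pi_1(|\Lambda V^1|)$ is uncountable so the orbit components of $(V^2)^{\sharp}$ need not be small, and the base cohomology $H^{3}(|\Lambda V^1|)$ receiving the transgression is itself of uncountable (and uncontrolled) dimension, so the kernel of $d_3$ on $E_3^{0,2}$ cannot be bounded below. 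Note also that this sub-case is not vacuous for the statement at hand (cf.\ the $S^1\vee S^1$ example, where $\dim V^1=\infty$ yet $H^*(\Lambda V)$ is of finite type), so it must be handled. The paper sidesteps all of this with a much shorter duality argument that never touches the realization fibrations: take the smallest $n$ with $\dim H^n(\Lambda V)=\dim H^n(|\Lambda V|)=\infty$; since $H^n(|\Lambda V|)=[H_n(|\Lambda V|)]^{\sharp}$ this dimension is an uncountable cardinal $\aleph$ strictly exceeding $\dim H_n(|\Lambda V|)$; a minimality argument shows $\im\xi_n$ has dimension $\aleph$, so $\im\xi_n^{\sharp}$ has dimension $>\aleph$; and the commutativity $\xi_n^{\sharp}\circ\iota_n=H^n(m_{|\Lambda V|})^{\sharp}\circ hur$ from \eqref{relation with hur} (resp.\ Lemma~\ref{commutative for deg 1} for $n=1$) forces that image to factor through $H_n(|\Lambda V|)$, of dimension $<\aleph$ --- a contradiction. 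You would do better to adopt this diagram-chase for the present theorem and reserve your cardinality-refined spectral sequence only where the paper already uses it, namely Theorem~\ref{infinite cohomology}.
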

\begin{proof}
Suppose there exists a quasi-isomorphism $\Lambda V \stackrel{\simeq}{\longrightarrow} A_{PL}(|\Lambda V|)$ and $H^*(\Lambda V)$ is not of finite type. Then let $n$ be the smallest integer for which $\dim H^n(\Lambda V) = \dim H^n(|\Lambda V|) = \infty$. Since $H^n(|\Lambda V|) = [H_n(|\Lambda V|)]^{\sharp}$, the dimension of $H^n(|\Lambda V|)$ must be uncountable. We denote this dimension by the cardinal number $\aleph$.

\noindent\textbf{Claim.} $n$ is also the smallest number such that $\dim V^n$ is uncountable. Additionally, the image of $\xi_n: H^n(\Lambda V) \to V^n$ has dimension $\aleph$.

Assume that $i<n$ is the smallest integer such that $\dim V^i$ is uncountable. Then $\dim (\Lambda V^{\leq i-1})^{i+1}$ is at most countable. Since $\Lambda V$ is minimal, the kernel of $d:V^i\to(\Lambda V^{\leq i-1})^{i+1}$ has uncountable dimension. Then so does $H^i(\Lambda V)$, which contradicts the hypothesis. Therefore, $V^{\leq n-1}$ must be of at most countable dimension, and so must be the cycles in $(\Lambda V^{\leq n-1})^{n+1}$. Given that $\dim H^n(\Lambda V)=\aleph$, which is uncountable, there must be an $\aleph$-dimensional subspace of $V^n$ containing all cycles. In particular, this subspace is $\im\xi_n$, which proves the claim.

As the restriction of $\xi_n^{\sharp}$ to $(\im\xi_n)^{\sharp}$ is injective, the dimension of $\im\xi_n^{\sharp}$ is at least $\dim (\im\xi_n)^{\sharp}$, which is strictly greater than $\aleph$.

When $n\geq 2$, by Theorem \ref{bij between homotopies}, we have $\im(\xi_n^{\sharp}\circ\iota_n)=\im\xi_n^{\sharp}$. So the dimension of this image is strictly greater than $\aleph$. On the other hand, $H^n(|\Lambda V|)$ is the infinite dimensional dual space of $H_n(|\Lambda V|)$. Therefore, $\dim H_n(|\Lambda V|)$ is strict smaller than $\aleph$, and so is $\dim \im(H^n\left(m_{|\Lambda V|}\right)^{\sharp}\circ hur)$. But $\xi_n^{\sharp}\circ\iota_n = H^n\left(m_{|\Lambda V|}\right)^{\sharp}\circ hur$, leading a contradiction.

When $n=1$, by Lemma \ref{commutative for deg 1}, the dimension of $\im (H^1 \left(m_{|\Lambda V|}\right)^{\sharp}\circ hur\circ\tau_1) = \im\xi_1^{\sharp}$ is strictly greater than $\aleph$. However, $\im (hur\circ\tau_1) \subset H_1(|\Lambda V|)$. Thus,
$$
\dim \im (H^1 \left(m_{|\Lambda V|}\right)^{\sharp}\circ hur\circ\tau_1) \leq \dim \im (hur\circ\tau_1) \leq \dim H_1(|\Lambda V|) < \aleph,
$$
which is a contradiction.
\end{proof}

Combining Theorem \ref{homology finite type sufficient} with Theorem \ref{homology finite type}, we have established Theorem \ref{intro cohomology of minimal Sullivan and realization}.

\begin{thm}
Let $\Lambda V$ be a minimal Sullivan algebra. The following statements are equivalent.
\begin{enumerate}[(i)]
\item There exists a quasi-isomorphism $\Lambda V \stackrel{\simeq}{\longrightarrow} A_{PL}(|\Lambda V|)$.
\item $m_{|\Lambda V|}:\Lambda V \to A_{PL}(|\Lambda V|)$ is a quasi-isomorphism.
\item $H^*(\Lambda V)$ has finite type.
\end{enumerate}
\end{thm}

Realization and taking minimal Sullivan model also translate corresponding morphisms into each other, which allows the above theorem to be expressed in the language of the category theory. A CDGA morphism $\phi:\Lambda V\to\Lambda W$ induces a continuous map $|\phi|:|\Lambda W|\to|\Lambda V|$. Moreover, the homotopy class of $|\phi|$ depends only on the homotopy class of $\phi$ \cite[Proposition 17.13]{FHT}. Conversely, given a continuous map $f:X\to Y$ between path connected spaces, and minimal Sullivan models $\Lambda V\stackrel{\simeq}{\longrightarrow} A_{PL}(X)$ and $\Lambda W\stackrel{\simeq}{\longrightarrow} A_{PL}(Y)$, there exists a CDGA morphism $\psi:\Lambda W\to\Lambda V$ unique up to homotopy making the following diagram homotopy commutative.

\begin{equation}\label{Sullivan representative for morphism}
\xymatrix@C=60pt{
\Lambda W \ar[r]^{\psi} \ar[d]^{\simeq} & \Lambda V \ar[d]^{\simeq} \\
A_{PL}(Y) \ar[r]^{A_{PL}(f)} & A_{PL}(X)
}
\end{equation}

Let $\mathcal{C}$ be the category whose objects are isomorphism classes of minimal Sullivan algebras with finite type cohomology, and whose morphisms are homotopy classes of CDGA morphisms. Let $\mathcal{D}$ be the category whose objects are path connected spaces with finite type cohomology, and whose morphisms are homotopy classes of continuous maps. The above discussion shows that realization $\mathcal{F}:\mathcal{C}\to \mathcal{D}$ and taking minimal Sullivan models $\mathcal{G}:\mathcal{D}\to \mathcal{C}$ are well-defined contravariant functors.

Theorem \ref{intro cohomology of minimal Sullivan and realization} then implies that $\mathcal{G}$ is a left inverse of $\mathcal{F}$. Hence, $\mathcal{F}$ is a faithful functor. That $\mathcal{F}$ is also full follows from the discussion below.

Given a CW complex $X$, the adjoint of the identify map on $Sing X$ is a homotopy equivalence $\lambda_X: |Sing X| \to X$ (\cite[Theorem 16.6(ii)]{may}, see also \cite[Proposition 1.5(iii)]{FHT2}). It has an inverse homotopy equivalence $\nu_X:X\to|Sing X|$ unique up to homotopy. Let $m_X:\Lambda V\stackrel{\simeq}{\longrightarrow} A_{PL}(X)$ be a minimal Sullivan model. The realization of its adjoint is $|m_X|:|Sing X|\to|\Lambda V|$. $\|m_X\|=|m_X|\circ\nu_X:X\to|\Lambda V|$ is a morphism in $\mathcal{D}$, independent of the choice of $\nu_X$.

\begin{prop}[Theorem 17.15 of \cite{FHT}]\label{thm 17.15 of FHT}
Let $f:X\to Y$ be a continuous map between path connected spaces, and let $m_X:\Lambda V\stackrel{\simeq}{\longrightarrow} A_{PL}(X)$, $m_Y:\Lambda W\stackrel{\simeq}{\longrightarrow} A_{PL}(Y)$ be minimal Sullivan models. Consequently, $f$ has a Sullivan representative $\psi:\Lambda W\to\Lambda V$ making \eqref{Sullivan representative for morphism} homotopy commutative. Then the following diagram is also homotopy commutative.
$$
\xymatrix@C=60pt{
X \ar[r]^{f} \ar[d]^{\|m_X\|} & Y \ar[d]^{\|m_Y\|} \\
|\Lambda V| \ar[r]^{|\psi|} & |\Lambda W|
}
$$
\end{prop}

\begin{rmk}
Although Theorem 17.15 of \cite{FHT} assumes that $X$ and $Y$ are simply connected and have finite type homologies, the proof extends to the general case without modification.
\end{rmk}

Now let $X=|\Lambda V|$ and $Y=|\Lambda W|$ with finite type cohomologies. By Theorem \ref{homology finite type sufficient} $m_{|\Lambda V|}: \Lambda V \to A_{PL}(|\Lambda V|)$ and $m_{|\Lambda W|}: \Lambda W \to A_{PL}(|\Lambda W|)$ are quasi-isomorphisms, hence minimal Sullivan models. Recall that $m_{|\Lambda V|}$ is a lift of $m_{\langle\Lambda V\rangle}:\Lambda V \to A_{PL}(\langle\Lambda V\rangle)$, which is the adjoint of the identity on $\langle\Lambda V\rangle$. Thus $\lambda_{\langle\Lambda V\rangle}: \langle\Lambda V\rangle \to Sing(|\Lambda V|)$, the adjoint of the identity on $|\Lambda V|$, serves as a left inverse to $\langle m_{|\Lambda V|} \rangle: Sing(|\Lambda V|) \to \langle\Lambda V\rangle $, which is the adjoint of $m_{|\Lambda V|}$. Consequently, the composition of realizations
$$
\xymatrix@C=40pt{
|Sing(|\Lambda V|)| \ar[r]^(.6){|m_{|\Lambda V|}|} & |\Lambda V| \ar[r]^(.4){|\lambda_{\langle\Lambda V\rangle}|} & |Sing(|\Lambda V|)|
}
$$
is the identity map on $|Sing(|\Lambda V|)|$.

On the other hand, by definition $|\lambda_{\langle\Lambda V\rangle}|$ is a right inverse to the homotopy equivalence $\lambda_{|\Lambda V|}:|Sing(|\Lambda V|)| \to |\Lambda V|$. Thus, we can take $\nu_{|\Lambda V|}=|\lambda_{\langle\Lambda V\rangle}|$, making $|m_{|\Lambda V|}|$ its homotopy inverse. Then $\|m_{|\Lambda V|}\| = |m_{|\Lambda V|}|\circ\nu_{|\Lambda V|} \sim id_{|\Lambda V|}$. For the same reason, $\|m_{|\Lambda W|}\| \sim id_{|\Lambda W|}$.

Therefore, applying Proposition \ref{thm 17.15 of FHT}, we get
$$
f \sim \|m_{|\Lambda W|}\|\circ f \sim |\psi|\circ\|m_{|\Lambda V|}\| \sim |\psi|.
$$
The homotopy class of $f$ is the same as that of $|\psi|$, which is the image of the homotopy class of $\psi$ under $\mathcal{F}$. This proves that $\mathcal{F}$ is full. In conclusion, we have the following statement.

\begin{thm}
$\mathcal{F}:\mathcal{C}\to\mathcal{D}$ is a full and faithful functor, with $\mathcal{G}$ as its left inverse.
\end{thm}

\section{Homotopy groups of a topological space and its minimal Sullivan model}

We can also investigate the relationship between the homotopy groups of a topological space $X$ and those of its minimal Sullivan model $\Lambda V$. Suppose $\dim H^1(X)<\infty$ and the cohomology $H^*(\widetilde{X})$ of the universal cover of $X$ is of finite type. In this case, spaces that induce isomorphisms $\pi_n(X)\otimes\mathbb{Q} \stackrel{\cong}{\longrightarrow} \pi_n(\Lambda V)$ for all $n\geq 2$ are known as Sullivan spaces (Definition \ref{def of Sullivan space}).  Without these cohomological conditions, there exist other spaces which share the same higher rational homotopy groups as their minimal Sullivan models, although they do not necessarily satisfy the five properties of Theorem \ref{thm 7.2 FHT}.

According to Theorem \ref{homology finite type sufficient}, if $X$ is weak homotopy equivalent to the realization of its minimal model $|\Lambda V|$, then $\Lambda V\to A_{PL}(X)$ induces isomorphisms on homotopy groups. For instance, $\Lambda V$ is the minimal model of $S^1\vee S^2$ in Example \ref{eg S1 vee S2}. In this case, $H^*(\tilde{X})=H^*(|\Lambda V^{\geq 2}|)$ does not have finite type, so Statement (iii) of Theorem \ref{thm 7.2 FHT} is not satisfied. Consequently, $m_{|\Lambda V^{\geq 2}|}:\Lambda V^{\geq 2} \to A_{PL}(|\Lambda V^{\geq 2}|)$ can not be a quasi-isomorphism (Theorem 1.5 of \cite{FHT2}), which means (ii) is not satisfied. Moreover, the $\pi_1(X)$-action on $H^*(\tilde{X})$ is not nilpotent, thus violating (v).

The following example demonstrates that $\dim H^1(X)$ can be infinity while the higher homotopy group of $X$ and its minimal model remain the same. Hence, (i) and (iv) of Theorem \ref{thm 7.2 FHT} are not satisfied.

\begin{ex}
Let $X=\bigvee_{i=1}^{\infty} S^1_i$ be an infinite wedge of circles. Then
$$
\dim H^n(X)=
\begin{cases}
1, & \text{if } n=0, \\
\infty, & \text{if } n=1, \\
0, & \text{if } n\geq 2. \\
\end{cases}
$$
Since $H^*(X)=H^1(X)\oplus\mathbb{Q}$, by Lemma 7.7 of \cite{FHT2}, the minimal Sullivan model of $X$ is $\Lambda V^1$ where $V^1$ is concentrated in degree 1. Consequently, both $\pi_n(X)$ and $\pi_n(\Lambda V)$ are trivial for $n\geq 2$.
\end{ex}

\begin{rmk}
Statement (i) and (iii) of Theorem \ref{thm 7.2 FHT} each contain two parts. When we require only that $\pi_n(X)\otimes\mathbb{Q} \cong \pi_n(\Lambda V)$, the above counterexamples fail both parts of (iii). However, whether $\Lambda V^1 \to A_{PL}(B\pi_1(X))$ in statement (i) is always a quasi-isomorphism remains unknown.
\end{rmk}

Given that the fundamental Lie algebra of a minimal Sullivan algebra $\Lambda V$ is related to $\pi_1(|\Lambda V|)$, we may investigate the conditions under which a similar relationship holds between $\pi_1(X)$ of a space $X$ and the fundamental Lie algebra of its minimal Sullivan model. By utilizing Malcev $\mathbb{Q}$-completions, we can establish a criterion that applies to spaces, including Sullivan spaces, while excluding those $X$ for which $\dim H^1(X)=\infty$.

A group $\Gamma$ is called Malcev $\mathbb{Q}$-complete if each $\Gamma^n/\Gamma^{n+1}$ is a $\mathbb{Q}$-vector space and $\Gamma = \varprojlim \Gamma^n/\Gamma^{n+1}$. The Malcev $\mathbb{Q}$-completion of a group $G$ is a homomorphism $\varphi:G\to\Gamma$, where $\Gamma$ is Malcev $\mathbb{Q}$-complete and $\varphi$ induces isomorphisms
$$
(G^n/G^{n+1}) \otimes_{\mathbb{Z}} \mathbb{Q} \stackrel{\cong}{\longrightarrow} \Gamma^n/\Gamma^{n+1}.
$$

By applying Corollary 2.4 and Corollary 7.4 of \cite{FHT2}, we obtain the following proposition concerning $\pi_1(X)$ when $\dim H^1(X)<\infty$.

\begin{prop}
Let $\Lambda V$ be a minimal Sullivan model of a topological space $X$. The induced morphism $\pi_1(X)\to\pi_1(\Lambda V)$ is a Malcev $\mathbb{Q}$-completion. Moreover, Let $L_0$ denote the fundamental Lie algebra of $\Lambda V$. Then there are isomorphisms of abelian groups
$$
(\pi_1(X)^n/\pi_1(X)^{n+1})\otimes_{\mathbb{Z}} \mathbb{Q} \cong L_0^n/L_0^{n+1}
$$
for all $n$.
\end{prop}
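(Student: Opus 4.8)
The plan is to obtain both assertions directly from Section~2 of \cite{FHT2} together with Sullivan's identification of the Malcev $\mathbb{Q}$-completion of $\pi_1(X)$ with the degree-one part of the minimal model, recorded as Corollary~7.4 of \cite{FHT2}. Throughout we use that, $X$ being a Sullivan space, $\dim H^1(X) = \dim H^1(\Lambda V) < \infty$; consequently $\pi_1(\Lambda V) = \im(\mathrm{exp}_L)$ is defined, the fundamental Lie algebra $L_0$ with $sL_0 = (V^1)^{\sharp}$ makes sense, and, setting $G_L = \pi_1(|\Lambda V^1|)$, we have $\pi_1(\Lambda V) = \pi_1(\Lambda V^1) = G_L$, which depends only on $V^1$.

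First I would record that $\pi_1(\Lambda V)$ is Malcev $\mathbb{Q}$-complete with associated graded pieces $L_0^n/L_0^{n+1}$. Corollary~2.4 of \cite{FHT2} gives, for every $n$, a bijection $L_0/L_0^n \to G_L/G_L^n$; these are compatible with the quotient maps as $n$ varies, so on associated graded objects they induce isomorphisms of abelian groups $L_0^n/L_0^{n+1} \xrightarrow{\cong} G_L^n/G_L^{n+1}$. Since $L_0$ is a Lie algebra over $\mathbb{Q}$, each $L_0^n/L_0^{n+1}$ is a $\mathbb{Q}$-vector space, hence so is each $G_L^n/G_L^{n+1}$; and $G_L = \varprojlim_n G_L/G_L^n$ by Theorem~2.2 of \cite{FHT2}. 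With the definition of Malcev $\mathbb{Q}$-completeness recalled above, this is exactly the assertion.

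Next I would invoke Corollary~7.4 of \cite{FHT2}: since $\dim H^1(X) < \infty$, the homomorphism $\phi : \pi_1(X) \to \pi_1(\Lambda V)$ induced by $m_X : \Lambda V \xrightarrow{\simeq} A_{PL}(X)$ induces, for every $n$, an isomorphism $(\pi_1(X)^n/\pi_1(X)^{n+1})\otimes_{\mathbb{Z}}\mathbb{Q} \xrightarrow{\cong} G_L^n/G_L^{n+1}$. Together with the completeness of $\pi_1(\Lambda V)$ from the previous step, $\phi$ then satisfies both requirements in the definition of a Malcev $\mathbb{Q}$-completion, which is the first statement. Composing the last isomorphism with $G_L^n/G_L^{n+1} \cong L_0^n/L_0^{n+1}$ yields $(\pi_1(X)^n/\pi_1(X)^{n+1})\otimes_{\mathbb{Z}}\mathbb{Q} \cong L_0^n/L_0^{n+1}$ as abelian groups, for all $n$, which is the second.

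I do not expect a genuine difficulty here: the substantive content is imported wholesale from \cite{FHT2}, and the remaining work is bookkeeping. The point to be careful about is that the bijections of Corollary~2.4 commute with the structure maps of the two towers $\{L_0/L_0^n\}$ and $\{G_L/G_L^n\}$, so that they descend to the associated graded pieces and pass to the inverse limit, and that the abstract lower central series $\{G_L^n\}$ entering the definition of Malcev completeness agrees with the filtration used in \cite{FHT2}. Once these identifications are in place the proposition follows at once.
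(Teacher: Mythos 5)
Your proposal is correct and follows essentially the same route as the paper: the paper's entire proof is the remark ``When $X$ is a Sullivan space, $\dim H^1(X)<\infty$. Apply Corollary 7.4 and Corollary 2.4 of [FHT2],'' and you invoke exactly those two results (plus Theorem 2.2 of [FHT2] for completeness of $G_L$), merely spelling out the bookkeeping the paper leaves implicit. No substantive difference.
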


On the other hand, the isomorphisms between $(\pi_1(X)^n/\pi_1(X)^{n+1})\otimes_{\mathbb{Z}} \mathbb{Q}$ and $L_0^n/L_0^{n+1}$ do not hold if $\dim H^1(X)=\infty$.

\begin{prop}
Let $\Lambda V$ be a minimal Sullivan model of a topological space $X$, and let $L_0$ denote the fundamental Lie algebra of $\Lambda V$. If $\dim H^1(X)=\infty$, then $H_1(X) = (\pi_1(X)/[\pi_1(X),\pi_1(X)])\otimes_{\mathbb{Z}} \mathbb{Q}$ and $L_0/[L_0,L_0]$ are not isomorphic.
\end{prop}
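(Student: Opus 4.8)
The plan is to build a surjection from $L_0/[L_0,L_0]$ onto the $\mathbb{Q}$-linear dual of $H^1(\Lambda V)$, and then to observe, using the fact that the linear dual of an infinite-dimensional $\mathbb{Q}$-vector space has strictly larger dimension (the Erd\H{o}s--Kaplansky theorem), that this dual is too large to be isomorphic to $H_1(X)$.

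The first step is purely algebraic. Since $f\colon\Lambda V\to A_{PL}(X)$ is a minimal Sullivan model, $H^1(\Lambda V)\cong H^1(X)$; and since $\Lambda V$ is minimal, $H^1(\Lambda V)$ is exactly the space $Z^1\subseteq V^1$ of degree-$1$ cocycles (there are no degree-$1$ coboundaries, and for degree reasons $dv\in\Lambda^2 V^1$ for every $v\in V^1$, so a degree-$1$ cocycle is precisely a $v\in V^1$ with $dv=0$). In particular $Z^1\subseteq V^1$ and $\dim_{\mathbb{Q}}Z^1=\dim H^1(X)=\infty$.

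Next I would use the fundamental Lie algebra. Recall from Section~2 that $sL_0=(V^1)^{\sharp}$ and that the bracket of $L_0$ is dual to the (purely quadratic) differential $d\colon V^1\to\Lambda^2 V^1$: for $x,y\in L_0$ and $v\in V^1$ one has $\langle s[x,y],v\rangle=\pm\langle sx\wedge sy,\,dv\rangle$. Hence if $v\in Z^1$ then $\langle s[x,y],v\rangle=0$ for all $x,y$, i.e. $[L_0,L_0]$ is annihilated by $Z^1$ under the evaluation pairing $L_0\times V^1\to\mathbb{Q}$. Restriction of functionals along $Z^1\hookrightarrow V^1$ is a surjection $(V^1)^{\sharp}\twoheadrightarrow(Z^1)^{\sharp}$ whose kernel $(Z^1)^{\perp}$, transported along the suspension $s$, contains $[L_0,L_0]$; so it descends to a surjection $L_0/[L_0,L_0]\twoheadrightarrow(Z^1)^{\sharp}$, giving $\dim_{\mathbb{Q}}L_0/[L_0,L_0]\ge\dim_{\mathbb{Q}}(Z^1)^{\sharp}$.

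Finally I would compare cardinalities. By universal coefficients $H^1(X)\cong (H_1(X))^{\sharp}$; since $\dim H^1(X)=\infty$, the space $H_1(X)$ must be infinite-dimensional, and then $\dim H^1(X)>\dim H_1(X)$. As $Z^1\cong H^1(X)$ is itself infinite-dimensional, the same fact gives $\dim(Z^1)^{\sharp}>\dim Z^1=\dim H^1(X)>\dim H_1(X)$. Combining with the previous step, $\dim_{\mathbb{Q}}L_0/[L_0,L_0]>\dim_{\mathbb{Q}}H_1(X)$, so $L_0/[L_0,L_0]$ and $H_1(X)$ are non-isomorphic $\mathbb{Q}$-vector spaces; and since any group isomorphism between $\mathbb{Q}$-vector spaces is automatically $\mathbb{Q}$-linear, they are not isomorphic as abelian groups either. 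The routine ingredients are the identification $H^1(\Lambda V)=Z^1$ and the dimension-of-dual estimates; the one point that needs genuine care is extracting from the conventions of Section~2 that the $L_0$-bracket is dual to $d|_{V^1}$, so that $[L_0,L_0]$ annihilates the cocycles $Z^1$ — everything after that is bookkeeping with cardinals.
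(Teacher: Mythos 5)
Your proposal is correct and follows essentially the same route as the paper: both produce a surjection $L_0/[L_0,L_0]\twoheadrightarrow [H^1(\Lambda V)]^{\sharp}$ (the paper by citing that $[L_0,L_0]$ vanishes on $V(0)=H^1(\Lambda V)$, you by unwinding the duality between the bracket and the quadratic differential on $V^1$ — the same fact), and both conclude by the Erd\H{o}s--Kaplansky cardinality comparison between an infinite-dimensional space and its dual. The only additions on your side are the explicit verification that the bracket annihilates $Z^1$ and the remark that a group isomorphism of $\mathbb{Q}$-vector spaces is automatically $\mathbb{Q}$-linear, both of which are fine.
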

\begin{proof}
Equip $V$ with the filtration $V(r)$ by setting $V(-1)=0$ and defining
$$
V(r) = \{ v\in V \mid dv\in\Lambda V(r-1) \}.
$$
Then $[L_0,L_0]$ vanishes on $V(0)=H^1(\Lambda V)$ \cite[Lemma 2.1]{FHT2}. As $sL_0=(V^1)^{\sharp}$, this induces a surjective morphism $L_0/[L_0,L_0] \to [H^1(\Lambda V)]^{\sharp}$. Therefore, the cardinal number of $\dim L_0/[L_0,L_0]$ is greater than or equal to the cardinal number of $\dim [H^1(\Lambda V)]^{\sharp}=\dim [H^1(X)]^{\sharp}$, which is strict larger than the cardinal number of $\dim H_1(X)$ due to the hypothesis $\dim H^1(X)=\infty$. Consequently, there can be no isomorphism between $H_1(X)$ and $L_0/[L_0,L_0]$.
\end{proof}

These propositions characterize spaces which induce isomorphisms on the higher rational homotopy groups and a Malcev $\mathbb{Q}$-completion on the fundamental group to their minimal Sullivan models. Such spaces represent a reasonable generalization of Sullivan spaces. For instance, Example \ref{eg S1 vee S2} meets these criteria but is not a Sullivan space, indicating that this generalization is non-trivial. This also demonstrates that these spaces do not necessarily fulfill Statements (ii), (iii), and (v) of Theorem \ref{thm 7.2 FHT}.

It is current unknown whether both parts of Statement (i) hold for such spaces. However, using the approach outlined in Theorem \ref{homology finite type}, we can prove that these spaces satisfy Statement (iv). This result is stated in Proposition \ref{intro finite type of H(X)} in the introduction.

\begin{prop}\label{finite type of H(X)}
Let $f:\Lambda V \stackrel{\simeq}{\longrightarrow} A_{PL}(X)$ be a minimal Sullivan model of a topological space $X$, with $\dim H^1(X) = \dim H^1(\Lambda V) < \infty$. If for all $n\geq 2$, the induced map $\pi_n(f):\pi_n(X)\otimes\mathbb{Q} \to \pi_n(\Lambda V)$ is an isomorphism, then $H^*(X)$ is of finite type.
\end{prop}
\begin{proof}
Assume that $H^*(X)$ is not of finite type. Then there exists a smallest $n$ such that $\dim H^n(\Lambda V) = \dim H^n(X)=\infty$. This dimension must be uncountable since $H^n(X)=[H_n(X)]^{\sharp}$. Denote this dimension by $\aleph$. As shown in the claim of Theorem \ref{homology finite type}, $n$ is also the smallest number such that $\dim V^n$ is uncountable, and the image of $\xi_n: H^n(\Lambda V) \to V^n$ is $\aleph$-dimensional. Then the restriction of $\xi_n^{\sharp}$ to $(\im\xi_n)^{\sharp}$ is injective, and the cardinal number of $\dim\im\xi_n^{\sharp}$ is strict greater than $\dim \im\xi_n=\aleph$.

The minimal model $f$ induces a map $|f|:X \to |\Lambda V|$, which satisfies $\iota_n\circ\pi_n(|f|) = \pi_n(f)$ for $n\geq 2$ \cite[Diagram (1.21)]{FHT2}. Consequently, $\pi_n(|f|): \pi_n(X)
\to \pi_n(|\Lambda V|)$ is an isomorphism. By the naturality of $hur$ and \eqref{relation with hur}, we obtain the following commutative diagram.
$$
\xymatrix@C=60pt{
\pi_n(X) \ar[r]^{\pi_n(|f|)}_{\cong} \ar[d]^{hur} & \pi_n(|\Lambda V|) \ar[r]^{\iota_n}_{\cong} \ar[d]^{hur} & (V^n)^{\sharp} \ar[d]^{\xi_n^{\sharp}} \\
H_n(X) \ar[r]^{H_n(|f|)} & H_n(|\Lambda V|) \ar[r]^{H^n \left(m_{|\Lambda V|}\right)^{\sharp}} & [H^n(\Lambda V)]^{\sharp}.
}
$$

Thus, the image of $\xi_n^{\sharp}\circ\iota_n\circ\pi_n(|f|)$ is $\im\xi_n^{\sharp}$, and the cardinality of its dimension is strictly larger than $\aleph$. But on the other hand,
\begin{align*}
\dim \im \xi_n^{\sharp}\circ\iota_n\circ\pi_n(|f|) &= \dim \im H^n \left(m_{|\Lambda V|}\right)^{\sharp} \circ H_n(|\Lambda V|) \circ hur \\
& \leq \dim \im H^n \left(m_{|\Lambda V|}\right)^{\sharp} \circ H_n(|\Lambda V|) \\
& \leq \dim H_n(X) \\
& < \aleph,
\end{align*}
which leads to a contradiction.
\end{proof}

The following example demonstrates that the converse of Proposition \ref{finite type of H(X)} does not hold. That $H^*(X)$ has finite type does not necessarily imply that $\pi_n(f):\pi_n(X)\otimes\mathbb{Q} \to \pi_n(\Lambda V)$ is an isomorphism for $n\geq 2$. Thus, although $X$ and $|\Lambda V|$ have the same rational cohomology, their rational higher homotopy groups can be different, even if $|\Lambda V|$ is a Sullivan space.

\begin{ex}
Let $X=\mathbb{R}P^2$. Then $H^*(X)=\mathbb{Q}$. So the minimal Sullivan model of $X$ is trivially $\mathbb{Q}$, and its realization is a point, which is a Sullivan space. However, $\pi_2(X)\otimes\mathbb{Q}=\mathbb{Q}\neq 0=\pi_2(\mathbb{Q})$. Thus, $X$ is not a Sullivan space.
\end{ex}

\bibliographystyle{plain}
\bibliography{refs}

\vskip 1 cm
\noindent
{Beijing Institute of Mathematical Sciences and Applications, Huairou District, Beijing, China 101408}\\
{\it Email address:}~{\tt jiaweizhou@bimsa.cn}
\end{document}